\theoremstyle{plain}
\newtheorem{theorem}{Theorem}[section]
\newtheorem{corollary}[theorem]{Corollary}
\newtheorem{proposition}[theorem]{Proposition}  
\newtheorem{example}[theorem]{Example}
\theoremstyle{definition}
\newtheorem{definition}[theorem]{Definition}
\theoremstyle{remark}
\newtheorem{remark}[theorem]{Remark}
\newtheorem{question}[theorem]{Question}
\author
{
Adrian Clough
}
\thanks{
The author acknowledges support by \emph{Tamkeen} under \emph{NYUAD Research Institute grant} \texttt{CG008}.
}
\date{\today}                                           
\title{Local rigidity and six functor formalisms}
\begin{document}

\maketitle

\begin{abstract} 
The coefficient categories of six functor formalisms are often locally rigid, and when this is the case, the exceptional pushforward and pullback adjunctions may be defined formally. In this short note it is shown that for $f$ a proper map resp.\ an open embedding the well known formulas $f_! \simeq f_*$ resp.\ $f_! \simeq f_\sharp$ may likewise be deduced formally. 
\end{abstract} 

Consider a site $C$ of suitable geometric objects such as schemes, manifolds, locally compact Hausdorff spaces, \ldots, then a six functor formalism on $C$ consists, roughly speaking, of a lax symmetric monoidal sheaf on $\Corr_C$ valued in $\mathbf{Pr}^L$ satisfying well-known properties expressing various geometrically meaningful dualities (see e.g., \cite{aK2023} or \cite{pS2023}).  In \cite[Th.~5.5.5.1]{jL2012} Lurie constructs for any locally compact Hausdorff space $X$ an equivalence between the symmetric monoidal categories $\Sh_X$ of spectrum-valued sheaves on $X$ and its dual in $\Mod{\Sp}$, the category $\mathbf{CoSh}_X$ of cosheaves on $X$.  Volpe uses this fact in \cite{mV2021} to construct a six functor formalism on the category of compactly generated Hausdorff spaces. For any continuous map $f: X \to Y$ the exceptional pushforward $f_!$ is determined using the commutative diagram 
\[\begin{tikzcd}
	{\Sh_X} & {\mathbf{CoSh}_X} \\
	{\Sh_Y} & {\mathbf{CoSh}_Y}
	\arrow["\simeq", from=1-1, to=1-2]
	\arrow["{f_!}"', from=1-1, to=2-1]
	\arrow["{f_*}", from=1-2, to=2-2]
	\arrow["\simeq"', from=2-1, to=2-2]
\end{tikzcd}\]
For any locally compact Hausdorff space $X$ the symmetric monoidal category $\Sh_X$  is \emph{locally rigid}, which entails that $\Sh_X$ canonically has the structure of Frobenius algebra in $\mathbf{Mod}_{\Sp}$ and is consequently canonically self-dual (see Example \ref{Hausdorff example}). In \cite{aKtN2024} it is observed that the self-duality $\Sh_X \simeq \mathbf{CoSh}_X$ is the one given by local rigidity, and thus that the construction of $f_!$ is completely canonical.  

It is well-known that the formula
$$f_* \simeq f_!$$
holds for $f$, a proper map in $C$, and 
$$f_! \simeq f_\sharp,$$
for $f$, an open embedding, where $f_\sharp$ denotes the left adjoint of $f^*$. In this note I define what it means for a symmetric monoidal functor $\mathcal{A} \leftarrow \mathcal{B}: f^*$ between locally rigid algebras to be proper or an open embedding, and derive the above formulas for such functors. It is expected that coefficient categories of six functor formalisms other than the one touched upon above are also locally rigid, opening up the possibility of defining exceptional pullback and pushforward functors and thus also establishing the above formulas in greater generality than previously possible. 

\subsubsection*{Organisation:}	This note consists of four sections: In \S \ref{Local rigidity} I recall some necessary background on locally rigid algebras. Then, in \S \ref{Proper functors} and \S \ref{Open embeddings} I prove the above formulas. Finally, in \S \ref{Parametrisation} I pose some questions about how to relate these results to parametrised homotopy theory. 

\subsubsection*{Notation:}	Denote by $\mathbf{CAlg}_{\mathbf{Pr}^\mathrm{L}}$ the symmetric monoidal $2$-category of presentably symmetric monoidal categories and colimit preserving symmetric monoidal functors. Reflecting the view towards six functor formalisms of the theory developed in this article, morphisms in $\mathbf{CAlg}_{\mathbf{Pr}^\mathrm{L}}$ are usually written as 
$$\mathcal{A} \leftarrow \mathcal{B}: f^*$$
with the right adjoint of $f^*$ denoted by $f_*: \mathcal{A} \to \mathcal{B}$ -- where $\mathcal{A}$ and $\mathcal{B}$ are to be thought of as coefficient categories associated to geometric objects $X,Y$ together with a map $f: X \to Y$. 

For a presentably symmetric monoidal category $\mathcal{A}$, the symmetric monoidal $2$-category of $\mathcal{A}$-modules and $\mathcal{A}$-linear functors is denoted by $\Mod{\mathcal{A}}$, and for any two $\mathcal{A}$-modules $\mathcal{M},\mathcal{N}$ the category of $\mathcal{A}$-linear functors a.k.a.\ morphisms of $\mathcal{A}$-modules $\mathcal{M} \to \mathcal{N}$ is denoted by $[\mathcal{M}, \mathcal{N}]_\mathcal{A}$. Moreover, I write $\mathbf{CAlg}_\mathcal{A}$ for $\big(\mathbf{CAlg}_{\mathbf{Pr}^\mathrm{L}}\big)_{\mathcal{A} / }$. 

\subsubsection*{Acknowledgements:}	I thank Maxime Ramzi for answering several questions, Rok Gregoric for a careful reading of a draft of this note, and Mitchell Riley for taking an interest in this project. 

\section{Local rigidity}	\label{Local rigidity} 

In this section I first recall some basic facts about locally rigid algebras and then define exceptional pushforward and pullback functors in \S \ref{exceptional}. 

Recall that a morphism $\mathcal{M} \leftarrow \mathcal{N}: f^*$ of $\mathcal{A}$-modules is an \Emph{$\mathcal{A}$-internal left adjoint} if its right adjoint $f_*: \mathcal{M} \to \mathcal{N}$ preserves colimits and satisfies the projection formula w.r.t.\ the $\mathcal{A}$-action, i.e., if for all objects $a$ in $\mathcal{A}$ and $m$ in $\mathcal{M}$ the natural map $a \otimes f_*(m) \to f_*(a \otimes m)$ is an isomorphism. 

\begin{definition}	\label{locally rigid} 
A morphism $\mathcal{A} \leftarrow \mathcal{B}$ in $\mathbf{CAlg}_{\mathbf{Pr}^\mathrm{L}}$ is \Emph{locally rigid} if
	\begin{enumerate}[label = (\alph*)]
	\item	\label{c1}	$\mathcal{A}$ is dualisable as an object of $\Mod{\mathbf{B}}$, and
	\item	\label{c2}	the multiplication map $\mathcal{A} \leftarrow \mathcal{A} \otimes_\mathcal{B} \mathcal{A}: \Delta^*$ is an $\mathcal{A} \otimes_\mathcal{B} \mathcal{A}$-internal left adjoint. 
	\end{enumerate} 
\qede
\end{definition} 

To my knowledge, the notion of local rigidity was first introduced in \cite{dG2015} (where it is called \emph{rigidity}). Locally rigid categories are discussed in detail in \cite{mR2024} and \cite[App.~C]{dAdGdGsRnRyV2020}. 

\begin{remark} 
It will be seen below that any locally rigid $\mathcal{B}$-algebra is canonically endowed with the structure of a Frobenius algebra in $\Mod{\mathcal{B}}$ (see \cite[Def.~4.6.5.1]{jL2012}). Condition \ref{c1} reflects the fact that any Frobenius algebra is canonically self dual. Condition \ref{c2} is equivalent to requiring that the functor $\Delta_*: \mathcal{A} \to \mathcal{A} \otimes_\mathcal{B} \mathcal{A}$ is $\mathcal{A}$-linear w.r.t.\ the two $\mathcal{A}$ actions on $\mathcal{A} \otimes_\mathcal{B} \mathcal{A}$, i.e., that the diagrams 
\[\begin{tikzcd}[column sep=large]
	{\mathcal{A} \otimes_\mathcal{B} \mathcal{A}} & {\mathcal{A} \otimes_\mathcal{B} \mathcal{A} \otimes_\mathcal{B} \mathcal{A}} & {\mathcal{A} \otimes_\mathcal{B}  \mathcal{A}} & {\mathcal{A} \otimes_\mathcal{B} \mathcal{A} \otimes_\mathcal{B} \mathcal{A}} \\
	{\mathcal{A}} & {\mathcal{A} \otimes_\mathcal{B} \mathcal{A}} & {\mathcal{A}} & {\mathcal{A} \otimes_\mathcal{B}  \mathcal{A}}
	\arrow["{\id \otimes\Delta_*}", from=1-1, to=1-2]
	\arrow["{\Delta^*}", from=1-1, to=2-1]
	\arrow["{\Delta^* \otimes\id}", from=1-2, to=2-2]
	\arrow["{\Delta_* \otimes\id}", from=1-3, to=1-4]
	\arrow["{\Delta^*}", from=1-3, to=2-3]
	\arrow["{\id \otimes\Delta^* }", from=1-4, to=2-4]
	\arrow["{\Delta_*}", from=2-1, to=2-2]
	\arrow["{\Delta_*}", from=2-3, to=2-4]
\end{tikzcd}\]
commute. By Proposition \ref{comultiplication} the functor $\Delta_*: \mathcal{A} \to \mathcal{A} \otimes_\mathcal{B} \mathcal{A}$ is the comultiplication associated to the Frobenius algebra structure on $\mathcal{A}$, so property \ref{c2} encodes the Frobenius law/condition of $\mathcal{A}$ (see \cite[Lm.~2.3.19]{jK2004}). \qede
\end{remark} 

Let $\mathcal{V} \leftarrow \mathcal{W}: p^*$ be a $\mathcal{W}$-internal left adjoint in $\mathbf{CAlg}_{\mathbf{Pr}^L}$ such that the essential image of $p^*$ generates $\mathcal{V}$ under colimits\footnote{It would probably be more natural to require $\mathcal{V}$ to be generated under $\mathcal{W}$-enriched colimits in some suitable sense.} then for any $\mathcal{W}$-module $\mathcal{P}$ the adjunction $\copadjunction{p_*: \mathcal{V}}{\mathcal{W}:p^*}$ induces adjunctions 
\begin{equation}	\label{free forget}
\copadjunction{\mathcal{P} = \mathcal{W} \otimes_\mathcal{W} \mathcal{P}}{\mathcal{V} \otimes_\mathcal{W} \mathcal{P}} \quad \text{and} \quad \copadjunction{\mathcal{P}  = [\mathcal{W}, \mathcal{P}]_\mathcal{W}}{[\mathcal{V}, \mathcal{P}]_\mathcal{W}},
\end{equation} 
both of which are monadic. The monads induced by these adjunctions are both equivalent to $p_*\mathbf{1}_\mathcal{V} \otimes \emptyinput$, so that $\mathcal{V} \otimes_\mathcal{W} \mathcal{P}$ and $[\mathcal{V}, \mathcal{P}]_\mathcal{W}$ are likewise equivalent (see \cite[Prop.~C.2.3]{dAdGdGsRnRyV2020} for details). 

\begin{remark} 
The functors $\mathcal{V} \otimes_\mathcal{W} \emptyinput$ and $[\mathcal{V}, \emptyinput]_\mathcal{W}$ are the left and right adjoint functors, respectively, of the restriction of scalars along $\mathcal{V} \leftarrow \mathcal{W}: p^*$. That these coincide in the above case could perhaps profitably be viewed as a form of ambidexterity, an important manifestation of duality in six functor formalisms. \qede
\end{remark} 

For the remainder of this section $\mathcal{A} \leftarrow \mathcal{B}: f^*$ denotes a locally rigid morphism in $\mathbf{CAlg}_{\mathbf{Pr}^L}$, and $(\emptyinput)^\vee$ denotes duality in $\Mod{\mathcal{B}}$ w.r.t.\ $\otimes_\mathcal{B}$. Applying the discussion above to  
	\begin{itemize}
	\item	$\mathcal{W}	 =	\mathcal{A} \otimes_\mathcal{B} \mathcal{A}$ 
	\item	$\mathcal{V}	=	\mathcal{A}$
	\item	$\mathcal{P} 	=	\mathcal{M}^\vee \otimes_\mathcal{B} \mathcal{N}$ for any $\mathcal{A}$-modules $\mathcal{M}$ and $\mathcal{N}$, with $\mathcal{M}$ dualisable over $\mathcal{B}$, and the action of $\mathcal{A} \otimes_\mathcal{B} \mathcal{A}$ on $\mathcal{M}^\vee \otimes_\mathcal{B} \mathcal{N}$ given by acting with each copy of $\mathcal{A}$ on $\mathcal{M}^\vee$ and $\mathcal{N}$, respectively,
	\end{itemize}
one obtains 
\begin{equation}	\label{locally rigid duality}
	\mathcal{M}^\vee \otimes_\mathcal{A} \mathcal{N}	\simeq	\mathcal{A} \otimes_{\mathcal{A} \otimes_\mathcal{B} \mathcal{A}} \mathcal{M}^\vee \otimes_\mathcal{B} \mathcal{N}
												\simeq	[\mathcal{A}, \mathcal{M}^\vee \otimes_\mathcal{B} \mathcal{N}]_{\mathcal{A} \otimes_\mathcal{B} \mathcal{A}}
												\simeq	[\mathcal{A}, [\mathcal{M}, \mathcal{N}]]_{\mathcal{A} \otimes_\mathcal{B} \mathcal{A}}
												\simeq	[\mathcal{M},\mathcal{N}]_\mathcal{A}
\end{equation} 
showing that $\mathcal{M}^\vee$ is not only dual to $\mathcal{M}$ as a $\mathcal{B}$-module, but also as an $\mathcal{A}$-module. Hence, setting $\mathcal{N} = \mathcal{A}$ yields 
\begin{equation}	\label{self-duality} 
	\mathcal{M}^\vee	\simeq	[\mathcal{M},\mathcal{A}]_\mathcal{A}.	
\end{equation} 

\begin{remark} 
The above discussion reflects a general phenomenon whereby for the locally rigid morphism $\mathcal{A} \leftarrow \mathcal{B}: f^*$, restriction of scalars both preserves and reflects properties of $\mathcal{A}$-modules. This phenomenon is explored in detail in \cite{mR2024}. \qede
\end{remark} 

The canonical equivalence $\mathcal{A}^\vee \simeq \mathcal{A}$ yields a commutative diagram 
\[\begin{tikzcd}
	{\mathcal{A}} & {\mathcal{A}} \\
	{\mathcal{A}^\vee \otimes_\mathcal{B} \mathcal{A}} & {\mathcal{A} \otimes_\mathcal{B} \mathcal{A}}
	\arrow["{ = }", from=1-1, to=1-2]
	\arrow["{\mathbf{1}_\mathcal{A} \mapsto\id_\mathcal{A}}"', from=1-1, to=2-1]
	\arrow["{\Delta_*}", from=1-2, to=2-2]
	\arrow[from=2-1, to=2-2]
\end{tikzcd}\]
and thus the counit of the self-duality of $\mathcal{A}$ is given by $\mathcal{B} \xrightarrow{f^*} \mathcal{A} \xrightarrow{\Delta_*} \mathcal{A} \otimes_\mathcal{B} \mathcal{A}$ (see \cite[Lm.~C.3.3]{dGdKnRyV2022}). 

\begin{proposition}[{\cite[Lm.~C.3.5]{dAdGdGsRnRyV2020}}]	\label{comultiplication} 
The $\mathcal{B}$-linear dual of $\Delta^*$  is given by $\Delta_*$. 
\end{proposition}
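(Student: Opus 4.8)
The plan is to prove \((\Delta^*)^\vee \simeq \Delta_*\) by exhibiting both sides as \(\mathcal{A}\)-linear, colimit-preserving functors \(\mathcal{A} \to \mathcal{A} \otimes_\mathcal{B} \mathcal{A}\) that carry \(\mathbf{1}_\mathcal{A}\) to the same object, and then invoking a rigidity/generation argument. The key reduction is that \(\mathcal{A}\), with its regular module structure — equivalently the \(\mathcal{A} \otimes_\mathcal{B} \mathcal{A}\)-action via \(\Delta^*\) restricted to the first factor — is the unit of \(\Mod{\mathcal{A}}\); hence evaluation at \(\mathbf{1}_\mathcal{A}\) is an equivalence \([\mathcal{A}, \mathcal{A} \otimes_\mathcal{B} \mathcal{A}]_\mathcal{A} \simeq \mathcal{A} \otimes_\mathcal{B} \mathcal{A}\), so an \(\mathcal{A}\)-linear functor out of \(\mathcal{A}\) is determined up to equivalence by the image of \(\mathbf{1}_\mathcal{A}\). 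Throughout I identify \(\mathcal{A}^\vee \simeq \mathcal{A}\) and \((\mathcal{A} \otimes_\mathcal{B} \mathcal{A})^\vee \simeq \mathcal{A} \otimes_\mathcal{B} \mathcal{A}\) via the self-dualities recalled above, whose coevaluation is \(c := \Delta_* \circ f^*\) and whose complementary evaluation is \(e := f_* \circ \Delta^*\) (the unique datum completing the zigzag identities with \(c\)).

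First I would check the structural hypotheses. The functor \(\Delta_*\) is \(\mathcal{A} \otimes_\mathcal{B} \mathcal{A}\)-linear by condition \ref{c2}, as spelled out in the Remark after Definition \ref{locally rigid}, and preserves colimits because \(\Delta^*\) is an \(\mathcal{A} \otimes_\mathcal{B} \mathcal{A}\)-internal left adjoint; in particular it is \(\mathcal{A}\)-linear for the first-factor action. For \((\Delta^*)^\vee\), I use that \(\Delta^*\) is a morphism of commutative algebras \(\mathcal{A} \otimes_\mathcal{B} \mathcal{A} \to \mathcal{A}\), hence a map of \(\mathcal{A} \otimes_\mathcal{B} \mathcal{A}\)-modules; taking \(\mathcal{B}\)-linear duals preserves module-linearity and sends morphisms of \(\mathbf{Pr}^L\) to morphisms of \(\mathbf{Pr}^L\), so \((\Delta^*)^\vee\) is again \(\mathcal{A} \otimes_\mathcal{B} \mathcal{A}\)-linear and colimit-preserving. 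One must also record that the self-duality equivalences are themselves \(\mathcal{A}\)-linear, so that after transport \((\Delta^*)^\vee\) becomes an honest \(\mathcal{A}\)-linear functor \(\mathcal{A} \to \mathcal{A} \otimes_\mathcal{B} \mathcal{A}\) for the same actions as \(\Delta_*\).

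It then remains to compare the two functors on \(\mathbf{1}_\mathcal{A}\). On one side, \(\Delta_*(\mathbf{1}_\mathcal{A}) = \Delta_* f^*(\mathbf{1}_\mathcal{B}) = c(\mathbf{1}_\mathcal{B})\) is the coevaluation, by the description of \(c\) recalled just before the statement. On the other side, the self-duality \(\mathcal{A} \simeq \mathcal{A}^\vee\) sends \(a\) to \(e(a \otimes -) = f_*(a \cdot -)\), so it identifies \(\mathbf{1}_\mathcal{A}\) with the trace \(f_* \in \mathcal{A}^\vee \simeq [\mathcal{A}, \mathcal{B}]_\mathcal{B}\); since \((\Delta^*)^\vee\) is precomposition with \(\Delta^*\), it carries this to \(f_* \circ \Delta^* = e\), the evaluation of the self-duality of \(\mathcal{A}\). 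A zigzag identity for the self-duality of \(\mathcal{A} \otimes_\mathcal{B} \mathcal{A}\) then identifies the functional \(e\), transported back to \(\mathcal{A} \otimes_\mathcal{B} \mathcal{A}\), with the coevaluation \(c(\mathbf{1}_\mathcal{B})\). Thus both functors send \(\mathbf{1}_\mathcal{A}\) to \(c(\mathbf{1}_\mathcal{B})\), and the reduction above forces \((\Delta^*)^\vee \simeq \Delta_*\).

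The main obstacle I expect is the third paragraph: matching \(\mathbf{1}_\mathcal{A}\) with the trace under the self-duality and running the zigzag that turns \(e\) back into \(c(\mathbf{1}_\mathcal{B})\), together with the bookkeeping in the second paragraph confirming that the self-duality equivalences are \(\mathcal{A}\)-linear so that the ``evaluation at the unit'' reduction is legitimate. No single step is deep, but coordinating the several duality and module-structure conventions coherently in the \(\infty\)-categorical setting is where the genuine work lies.
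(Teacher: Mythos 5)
Your overall strategy --- show that both $\Delta_*$ and the transported $(\Delta^*)^\vee$ are $\mathcal{A}$-linear colimit-preserving functors $\mathcal{A} \to \mathcal{A} \otimes_\mathcal{B} \mathcal{A}$ and then compare their values on $\mathbf{1}_\mathcal{A}$ via the equivalence $[\mathcal{A}, \mathcal{A} \otimes_\mathcal{B} \mathcal{A}]_\mathcal{A} \simeq \mathcal{A} \otimes_\mathcal{B} \mathcal{A}$ --- is a reasonable reduction and genuinely different from the paper's (which transports both functors to $[\mathcal{B}, \mathcal{A} \otimes_\mathcal{B} \mathcal{A} \otimes_\mathcal{B} \mathcal{A}]_\mathcal{B}$ and compares them by a diagram chase). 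But the decisive computation in your third paragraph contains a genuine error. You set $e := f_* \circ \Delta^*$ and claim the self-duality $\mathcal{A} \simeq \mathcal{A}^\vee$ sends $\mathbf{1}_\mathcal{A}$ to ``the trace $f_*$''. For a merely \emph{locally} rigid algebra this is false: the functional that $\mathbf{1}_\mathcal{A}$ corresponds to is the exceptional global sections functor $\Gamma_{\mathcal{A}!} = (f^*)^\vee$, and $\Gamma_{\mathcal{A}!} \simeq f_*$ holds if and only if $\mathcal{A}$ is \emph{rigid} over $\mathcal{B}$ (Proposition \ref{rigid equivalent}). Concretely, for $\mathcal{A} = \Sh_X$ with $X$ a non-compact locally compact Hausdorff space, the relevant functional is $\Gamma_c$, not $\Gamma = f_*$. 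Worse, for a non-rigid locally rigid algebra $f_*$ need not preserve colimits nor satisfy the projection formula, so $f_* \circ \Delta^*$ is not even a morphism in $\Mod{\mathcal{B}}$ and cannot serve as an evaluation datum. Had your argument worked as written, it would prove $\Gamma_{\mathcal{A}!} \simeq f_*$ for every locally rigid algebra, contradicting Proposition \ref{rigid equivalent}.

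There is a second, subtler problem that persists even after replacing $f_*$ by $\Gamma_{\mathcal{A}!}$: the assertion that the functional $(\Delta^*)^\vee(\mathbf{1}_\mathcal{A}) = \Gamma_{\mathcal{A}!} \circ \Delta^*$ \emph{is} the evaluation of the self-duality of $\mathcal{A}$ is precisely the Frobenius law, i.e.\ the dual formulation of the proposition you are trying to prove. In the paper's logical order, the only datum available before Proposition \ref{comultiplication} is the coevaluation $\mathcal{B} \xrightarrow{f^*} \mathcal{A} \xrightarrow{\Delta_*} \mathcal{A} \otimes_\mathcal{B} \mathcal{A}$; the formula $e \simeq (f^*)^\vee \circ \Delta^*$ for the evaluation is \emph{deduced from} the proposition, immediately after its proof. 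You give no independent derivation of it, so your key step is circular. This is exactly where the real work lies, and it is what the paper's diagram chase accomplishes: transporting $\Delta^*$ and $\Delta_*$ under duality to functors $\mathcal{B} \to \mathcal{A} \otimes_\mathcal{B} \mathcal{A} \otimes_\mathcal{B} \mathcal{A}$ and matching them using coassociativity and cocommutativity of $\Delta_*$ --- including repairing a triangle that does \emph{not} commute by inserting the swap $\sigma_{1,2}$. That non-trivial bookkeeping is what your appeal to ``a zigzag identity'' elides.
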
 

\begin{proof} 
Under the canonical equivalences $[\mathcal{A} \otimes_\mathcal{B} \mathcal{A}, \mathcal{A}]_\mathcal{B} \simeq [\mathcal{A}, \mathcal{A} \otimes_\mathcal{B} \mathcal{A}]_\mathcal{B} \simeq [\mathcal{B}, \mathcal{A} \otimes_\mathcal{B} \mathcal{A} \otimes_\mathcal{B} \mathcal{A}]_\mathcal{B}$ given by duality in $\Mod{\mathcal{B}}$ w.r.t.\ $\otimes_\mathcal{B}$ and local rigidity, the functors $\Delta^*$ and $\Delta_*$ correspond to the functors $\mathcal{B} \to \mathcal{A} \otimes_\mathcal{B} \mathcal{A} \otimes_\mathcal{B} \mathcal{A}$ given by the two outer paths in the following diagram
\[\begin{tikzcd}[column sep=4em]
	{\mathcal{B}} & {\mathcal{A} \otimes_\mathcal{B} \mathcal{A}} & {\mathcal{A} \otimes_\mathcal{B} \mathcal{A} \otimes_\mathcal{B} \mathcal{A}} & {\mathcal{A} \otimes_\mathcal{B} \mathcal{A} \otimes_\mathcal{B} \mathcal{A} \otimes_\mathcal{B} \mathcal{A}} & {\mathcal{A} \otimes_\mathcal{B} \mathcal{A} \otimes_\mathcal{B} \mathcal{A} \otimes_\mathcal{B} \mathcal{A}} \\
	& {\mathcal{A}} & {\mathcal{A} \otimes_\mathcal{B} \mathcal{A}} & {\mathcal{A} \otimes_\mathcal{B} \mathcal{A} \otimes_\mathcal{B} \mathcal{A}}
	\arrow[from=1-1, to=1-2]
	\arrow[from=1-1, to=2-2]
	\arrow["{\Delta_* \otimes \id}", from=1-2, to=1-3]
	\arrow["{\Delta^*}", from=1-2, to=2-2]
	\arrow["{\id \otimes \id \otimes \Delta_*}", from=1-3, to=1-4]
	\arrow["{\id \otimes \Delta^*}", from=1-3, to=2-3]
	\arrow["{\sigma_{2,3}}", from=1-4, to=1-5]
	\arrow["{\id \otimes \Delta^* \otimes \id}", from=1-4, to=2-4]
	\arrow["{\Delta^* \otimes\  \id \otimes \id}", from=1-5, to=2-4]
	\arrow["{\Delta_*}", from=2-2, to=2-3]
	\arrow["{\id \otimes \Delta_*}", from=2-3, to=2-4]
\end{tikzcd}\]
where $\sigma_{i,j}$ exchanges the $i$-th and $j$-th factors.

The leftmost triangle as well as the two squares commute. Unfortunately, the rightmost triangle does not. Using the parts of the preceding diagram which do commute as well as the cocommutativity and coassociativity of $\Delta_*$ it easily checked that the following diagram 
\[\begin{tikzcd}[column sep=4em]
	{\mathcal{B}} & {\mathcal{A} \otimes_\mathcal{B} \mathcal{A}} & {\mathcal{A} \otimes_\mathcal{B} \mathcal{A} \otimes_\mathcal{B} \mathcal{A}} & {\mathcal{A} \otimes_\mathcal{B} \mathcal{A} \otimes_\mathcal{B} \mathcal{A} \otimes_\mathcal{B} \mathcal{A}} \\
	& {\mathcal{A}} & {\mathcal{A} \otimes_\mathcal{B} \mathcal{A}} & {\mathcal{A} \otimes_\mathcal{B} \mathcal{A} \otimes_\mathcal{B} \mathcal{A}} & {\mathcal{A} \otimes_\mathcal{B} \mathcal{A} \otimes_\mathcal{B} \mathcal{A}}
	\arrow[from=1-1, to=1-2]
	\arrow[from=1-1, to=2-2]
	\arrow["{\Delta_* \otimes \id}", from=1-2, to=1-3]
	\arrow["{\Delta^*}", from=1-2, to=2-2]
	\arrow["{\sigma_{1,2} \otimes \Delta_*}", from=1-3, to=1-4]
	\arrow["{\id \otimes \Delta^* \otimes \id}", from=1-4, to=2-4]
	\arrow["{\Delta_*}", from=2-2, to=2-3]
	\arrow["{\Delta_* \otimes \id }", from=2-3, to=2-4]
	\arrow["{\sigma_{1,2}}", from=2-4, to=2-5]
\end{tikzcd}\]
commutes and has outer paths canonically equivalent to those of the first diagram.
\end{proof} 

Thus, the dual of the counit -- which is equivalent to the unit -- is given by $\mathcal{A} \otimes_\mathcal{B} \mathcal{A} \xrightarrow{\Delta^*} \mathcal{A} \xrightarrow{(f^*)^\vee} \mathcal{B}$, exhibiting $\mathcal{A}$ as a \emph{Frobenius algebra} over $\mathcal{B}$ (see \cite[Def.~4.6.5.1]{jL2012}). 

Observe that $(f^*)^\vee$ corresponds to the image of $\mathbf{1}_\mathcal{B}$ under $\mathcal{B} = [\mathcal{B}, \mathcal{B}]_\mathcal{B} \xrightarrow{\simeq} [\mathcal{B}, \mathcal{A}]_\mathcal{A}$. This morphism is called the \Emph{exceptional global sections functor} and is denoted by $\Gamma_{\mathcal{B}!}^\mathcal{A}$ (or $\Gamma_{\mathcal{B}!}$, when $\mathcal{A}$ is clear from context). 

\begin{example}[{\cite[\S 6]{kA2023v}}] \label{Hausdorff example} Let $X$ be a locally compact Hausdorff space, then the category of spectrum-valued sheaves $\Sh_X$ is a locally rigid $\Sp$-algebra. By \cite[Th.~5.5.5.1]{jL2012} the category $\Sh_X$ is dualisable. The equivalence $\Sh_X \otimes_{\Sp} \Sh_X = \Sh_{X \times X}$ (see \cite[Ex.~4.8.1.19]{jL2012}) together with the Hausdorff condition on $X$ ensures that the functor $\Sh_X \xleftarrow{\Delta^*} \Sh_X  \otimes_\Sp \Sh_X $ is an $\Sh_X  \otimes_\Sp \Sh_X $-internal right adjoint. The exceptional global sections functor is given by global sections with compact support, explaining the notation above. \qede
\end{example} 

\begin{remark}	\label{LCH}
The notion of a locally rigid algebra $\mathcal{A} \leftarrow \mathcal{B}$ is itself reminiscent of a locally compact Hausdorff space: Condition \ref{c1} says that $\mathcal{A}$ is locally compactly generated. Indeed, an $\Sp$-module is dualisable precisely when it is generated under colimits by compactly exhaustible objects (see \cite[Th.~2.36]{mR2024}), and for $\Sh_X$, an open subset of $X$ is compactly exhaustible if it can be written as the union of an increasing sequence of compact subsets. Condition \ref{c2} encodes separatedness, as the multiplication map is proper in the sense of \S \ref{Proper functors} (see also Question \ref{question}). \qede
\end{remark} 

I will briefly discuss two special classes of locally rigid algebras, further illustrating the analogy between such algebras and locally compact Hausdorff spaces. 

\begin{proposition}	\label{rigid equivalent} 
The following are equivalent: 
	\begin{enumerate}[label = {\normalfont(\Roman*)}]
	\item The functor $\mathcal{A} \leftarrow \mathcal{B}:f^*$ admits a $\mathcal{B}$-internal right adjoint. 
	\item	There is an equivalence $\Gamma_{\mathcal{A}!} \simeq f_*$.
	\end{enumerate} 
\end{proposition}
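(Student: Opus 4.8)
The plan is to treat the two implications separately, the reverse one being immediate and the forward one resting on a short adjunction computation. Throughout recall that $\Gamma_{\mathcal{A}!}$ is the exceptional global sections functor $(f^*)^\vee$.

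For (II) $\Rightarrow$ (I): since $\mathcal{A}$ is dualisable over $\mathcal{B}$ by condition \ref{c1} (and $\mathcal{B}$ is trivially dualisable), $f^*$ is a $1$-morphism between dualisable objects of the symmetric monoidal $2$-category $\Mod{\mathcal{B}}$, and its dual $(f^*)^\vee$ is again such a $1$-morphism; in particular $\Gamma_{\mathcal{A}!}$ preserves colimits and satisfies the projection formula with respect to the $\mathcal{B}$-action. Hence if $f_* \simeq \Gamma_{\mathcal{A}!}$, then $f_*$ inherits these properties, which is precisely the assertion that $f^*$ admits a $\mathcal{B}$-internal right adjoint.

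For (I) $\Rightarrow$ (II): assume $f_*$ preserves colimits and satisfies the projection formula, so that $f^* \dashv f_*$ is an adjunction between $1$-morphisms of $\Mod{\mathcal{B}}$, and $\mathcal{A}$ is self-dual via $\mathrm{coev}_\mathcal{A} = \Delta_* \circ f^*$ and $\mathrm{ev}_\mathcal{A} = \Gamma_{\mathcal{A}!} \circ \Delta^*$. Under the induced equivalence $\mathcal{A}^\vee \simeq \mathcal{A}$ the $\mathcal{B}$-linear functor $f_* \in \mathcal{A}^\vee = [\mathcal{A}, \mathcal{B}]_\mathcal{B}$ is classified by the object $(\id_\mathcal{A} \otimes f_*)(\mathrm{coev}_\mathcal{A}) \in \mathcal{A}$, whereas $\Gamma_{\mathcal{A}!}$ is classified by $\mathbf{1}_\mathcal{A}$; it therefore suffices to identify these two objects. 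I would do this by showing that the endofunctor
\[
	\Phi \;:=\; (\id_\mathcal{A} \otimes f_*) \circ \Delta_* \colon \mathcal{A} \longrightarrow \mathcal{A}
\]
is the identity, for then $(\id_\mathcal{A} \otimes f_*)(\mathrm{coev}_\mathcal{A}) = \Phi(\mathbf{1}_\mathcal{A}) = \mathbf{1}_\mathcal{A}$. To see that $\Phi \simeq \id_\mathcal{A}$, I pass to left adjoints: by condition \ref{c2} the multiplication $\Delta^*$ is an $\mathcal{A} \otimes_\mathcal{B} \mathcal{A}$-internal left adjoint, so $\Delta^* \dashv \Delta_*$, and since (I) places $f^* \dashv f_*$ inside $\Mod{\mathcal{B}}$, base change along the symmetric monoidal $2$-functor $\mathcal{A} \otimes_\mathcal{B} (\emptyinput)$ gives $\id_\mathcal{A} \otimes f^* \dashv \id_\mathcal{A} \otimes f_*$. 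Composing, the left adjoint of $\Phi$ is $\Delta^* \circ (\id_\mathcal{A} \otimes f^*)$, which sends $a \mapsto a \cdot \mathbf{1}_\mathcal{A}$, i.e.\ is the right unitor, hence the identity; so $\Phi$ is the right adjoint of $\id_\mathcal{A}$, namely $\id_\mathcal{A}$, and $f_* \simeq \Gamma_{\mathcal{A}!}$.

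I expect the main obstacle to be bookkeeping rather than conceptual. The hypothesis (I) is spent exactly in guaranteeing that $f_*$ is a genuine $1$-morphism of $\Mod{\mathcal{B}}$, so that tensoring the adjunction $f^* \dashv f_*$ with $\mathcal{A}$ produces an honest adjunction of $\mathcal{B}$-linear functors rather than a mere adjunction of underlying functors; the delicate point is to confirm that $\mathcal{A} \otimes_\mathcal{B} (\emptyinput)$ really preserves this adjunction as a $2$-functor. One must also check that the identification of $f_*$ with $(\id_\mathcal{A} \otimes f_*)(\mathrm{coev}_\mathcal{A})$ is the one induced by the Frobenius self-duality, i.e.\ is compatible with the normalisation $\mathrm{ev}_\mathcal{A} = \Gamma_{\mathcal{A}!} \circ \Delta^*$ recorded above. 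Both are routine given the structure already assembled, so the heart of the argument is the one-line computation $\Phi^L = \Delta^* \circ (\id_\mathcal{A} \otimes f^*) \simeq \id_\mathcal{A}$.
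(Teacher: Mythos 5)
Your proof is correct, but the forward implication takes a genuinely different route from the paper's. For (I) $\Rightarrow$ (II) the paper shows that $\Delta_* \circ f^*$ and $f_* \circ \Delta^*$ constitute dualisability data (outsourcing the explicit verification of the triangle identities to \cite[\S 2.2]{mHpSsSnS2021}), then invokes essential uniqueness of the counit for a fixed coevaluation to get $f_* \circ \Delta^* \simeq \Gamma_{\mathcal{A}!} \circ \Delta^*$, and finally strips off $\Delta^*$ by precomposing with a section. You never check any triangle identity for $f_* \circ \Delta^*$: you spend hypothesis (I) to internalise the adjunction $f^* \dashv f_*$ to $\Mod{\mathcal{B}}$, tensor it with $\mathcal{A}$, and compare the objects of $\mathcal{A}$ classifying $f_*$ and $\Gamma_{\mathcal{A}!}$ under the self-duality $\mathcal{A} \simeq [\mathcal{A},\mathcal{B}]_\mathcal{B}$ that local rigidity already supplies; the key identification $\Phi = (\id \otimes f_*) \circ \Delta_* \simeq \id_\mathcal{A}$ then falls out of uniqueness of right adjoints, because the left adjoint $\Delta^* \circ (\id \otimes f^*)$ is the unit constraint of the algebra $\mathcal{A}$. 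So where the paper uses uniqueness of duality data plus an external citation, you use uniqueness of adjoints plus standard duality yoga: your argument is self-contained, at the cost of needing the explicit inverse $\lambda \mapsto (\id \otimes \lambda)(\Delta_*\mathbf{1}_\mathcal{A})$ of the classifying equivalence (the ``normalisation'' check you flag, which is indeed routine), and both proofs spend (I) at the same essential point, namely in making $f_*$ a genuine $1$-morphism of $\Mod{\mathcal{B}}$ so that duality-theoretic manipulations apply to it. Your (II) $\Rightarrow$ (I) coincides with the paper's, which routes it through Proposition \ref{linear}: in both cases one reads off cocontinuity and the projection formula from the construction of $\Gamma_{\mathcal{A}!}$ as the $\mathcal{B}$-linear dual of $f^*$, and both versions share the same (harmless, at this paper's level of rigour) gloss in transporting the projection-formula property across a bare equivalence of functors $f_* \simeq \Gamma_{\mathcal{A}!}$.
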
 

\begin{proof} 
Assuming (I), it is shown explicitly that $\mathcal{B} \xrightarrow{f^*} \mathcal{A} \xrightarrow{\Delta_*} \mathcal{A} \otimes_\mathcal{B} \mathcal{A}$ and $\mathcal{A} \otimes_\mathcal{B} \mathcal{A} \xrightarrow{\Delta^*} \mathcal{A} \xrightarrow{f_*} \mathcal{B}$ provide dualisability data in  \cite[\S 2.2]{mHpSsSnS2021}. Thus, $\mathcal{A} \otimes_\mathcal{B} \mathcal{A} \xrightarrow{\Delta^*} \mathcal{A} \xrightarrow{f_*} \mathcal{B}$ is equivalent to the counit $\mathcal{A} \otimes_\mathcal{B} \mathcal{A} \xrightarrow{\Delta^*} \mathcal{A} \xrightarrow{\Gamma_{\mathcal{A}!}} \mathcal{B}$. As $\mathcal{A} \otimes_\mathcal{B} \mathcal{A} \xrightarrow{\Delta^*} \mathcal{A}$ admits a section, it follows that $\Gamma_{\mathcal{A}!} \simeq f_*$. 

The converse follows from Proposition \ref{linear} below (which is independent of this proposition). 
\end{proof} 

\begin{definition} 
The morphism $\mathcal{A} \leftarrow \mathcal{B}:f^*$ is \Emph{rigid} if it satisfies the equivalent definitions of Proposition \ref{rigid equivalent}. \qede
\end{definition} 

\begin{remark} 
The analogy between locally compact Hausdorff spaces and locally rigid algebras in Remark \ref{LCH} specialises to one between rigid algebras and  \emph{compact} Hausdorff spaces, as the morphism $\mathcal{A} \leftarrow \mathcal{B}:f^*$ is proper in the sense described in \S \ref{Proper functors}. 

Another interesting special case is when $\mathcal{A}$ is compactly generated. By \cite{oH2023} this is analogous to a totally disconnected locally compact Hausdorff space. \qede
\end{remark} 

\begin{remark} 
In \cite[App.~F]{aE2024} Efimov explains a different, more detailed analogy between dualisable $\Sp$-modules and compact Hausdorff spaces. \qede
\end{remark} 

Before moving on to exceptional pushforwards and pullbacks in \S \ref{exceptional} I will add some clarifications about the relationship between duality over $\mathcal{A}$ and duality over $\mathcal{B}$. 

Recall that restriction of scalars $\Mod{\mathcal{A}} \to \Mod{\mathcal{B}}$ has a right adjoint given by $[\mathcal{A}, \mathcal{N}]_\mathcal{B} \mapsfrom \mathcal{N}$, so that one obtains the following natural equivalences for any $\mathcal{A}$-module $\mathcal{M}$
\begin{equation}	\label{Gamma}
\begin{array}{lllll}
[\mathcal{M},\mathcal{A}]_\mathcal{A}	&	\xrightarrow{\simeq}	&	[\mathcal{M},[\mathcal{A},\mathcal{B}]_\mathcal{B}]_\mathcal{A}			&	\xrightarrow{\simeq}	&	[\mathcal{M},\mathcal{B}]_\mathcal{B}	\\
\lambda							&	\mapsto			&	\Gamma_{\mathcal{A}!}\big(\lambda(\emptyinput)\otimes(\emptyinput)\big)	&	\mapsto			&	\Gamma_{\mathcal{A}!} \circ \lambda.
\end{array}
\end{equation} 

\begin{proposition} 
For any $\mathcal{A}$-modules $\mathcal{M}, \mathcal{N}$ with $\mathcal{M}$ dualisable as a $\mathcal{B}$-module, the equivalence given by composing 
$$	 [\mathcal{M}, \mathcal{N}]_\mathcal{A}	\xrightarrow{\simeq}											[\mathcal{M}, \mathcal{A}]_\mathcal{A} \otimes_\mathcal{A} \mathcal{N}	
									\xrightarrow{(\Gamma_{\mathcal{A} !} \circ \emptyinput , \id_\mathcal{N})}	[\mathcal{M}, \mathcal{B}]_\mathcal{B} \otimes_\mathcal{A} \mathcal{N}	$$
is equivalent to (\ref{locally rigid duality}). 
\end{proposition}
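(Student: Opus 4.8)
The plan is to prove the two equivalences agree by exhibiting both as $\mathcal{A}$-linear natural transformations in the variable $\mathcal{N}$ and then reducing the comparison to the single case $\mathcal{N}=\mathcal{A}$, where it becomes the compatibility between the two descriptions of the self-duality of $\mathcal{A}$ recorded just before Proposition \ref{comultiplication}. First I would note that since $\mathcal{M}$ is dualisable over $\mathcal{B}$, it is dualisable over $\mathcal{A}$ with $\mathcal{A}$-linear dual $[\mathcal{M},\mathcal{A}]_\mathcal{A}\simeq\mathcal{M}^\vee$ by (\ref{self-duality}); hence both the source functor $[\mathcal{M},\emptyinput]_\mathcal{A}$ and the target functor $\mathcal{M}^\vee\otimes_\mathcal{A}\emptyinput$ are $\mathcal{A}$-linear and colimit-preserving in $\mathcal{N}$. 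The chain (\ref{locally rigid duality}) is natural in $\mathcal{P}=\mathcal{M}^\vee\otimes_\mathcal{B}\mathcal{N}$, hence $\mathcal{A}$-linearly natural in $\mathcal{N}$, and the composite of the proposition is manifestly so, being the dualisability equivalence $[\mathcal{M},\mathcal{N}]_\mathcal{A}\simeq[\mathcal{M},\mathcal{A}]_\mathcal{A}\otimes_\mathcal{A}\mathcal{N}$ followed by $(\Gamma_{\mathcal{A}!}\circ\emptyinput)\otimes_\mathcal{A}\id_\mathcal{N}$. Thus both are $\mathcal{A}$-linear natural transformations of functors $\Mod{\mathcal{A}}\to\Mod{\mathcal{A}}$.

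Because $\mathcal{A}$ is the monoidal unit of $\Mod{\mathcal{A}}$, evaluation at $\mathcal{N}=\mathcal{A}$ identifies $\mathcal{A}$-linear colimit-preserving endofunctors of $\Mod{\mathcal{A}}$, together with their natural transformations, with $\Mod{\mathcal{A}}$ itself; in particular a natural transformation between two such functors is an equivalence, and is determined up to equivalence, by its component at $\mathcal{N}=\mathcal{A}$. It therefore suffices to compare the two equivalences after setting $\mathcal{N}=\mathcal{A}$. There the chain (\ref{locally rigid duality}) specialises, by construction, to the self-duality (\ref{self-duality}), namely $\mathcal{M}^\vee\simeq[\mathcal{M},\mathcal{A}]_\mathcal{A}$. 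On the other side the first map becomes the unit isomorphism $[\mathcal{M},\mathcal{A}]_\mathcal{A}\simeq[\mathcal{M},\mathcal{A}]_\mathcal{A}\otimes_\mathcal{A}\mathcal{A}$ and the second becomes $\Gamma_{\mathcal{A}!}\circ\emptyinput\colon[\mathcal{M},\mathcal{A}]_\mathcal{A}\to[\mathcal{M},\mathcal{B}]_\mathcal{B}=\mathcal{M}^\vee$, which is precisely the equivalence (\ref{Gamma}). Hence the claim reduces to showing that postcomposition with $\Gamma_{\mathcal{A}!}$ is inverse to (\ref{self-duality}).

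Finally I would verify this last identity. Here I would use that (\ref{Gamma}) is induced by the self-duality $\mathcal{A}\simeq[\mathcal{A},\mathcal{B}]_\mathcal{B}$ implemented by $\Gamma_{\mathcal{A}!}=(f^*)^\vee$, together with the explicit description of the (co)unit of the self-duality of $\mathcal{A}$ in terms of $f^*$, $\Delta_*$, $\Delta^*$ and $\Gamma_{\mathcal{A}!}$ recorded before Proposition \ref{comultiplication}, and Proposition \ref{comultiplication} itself ($\Delta_*=(\Delta^*)^\vee$). Concretely, I would trace the ambidexterity equivalence $\mathcal{A}\otimes_{\mathcal{A}\otimes_\mathcal{B}\mathcal{A}}\emptyinput\simeq[\mathcal{A},\emptyinput]_{\mathcal{A}\otimes_\mathcal{B}\mathcal{A}}$ underlying (\ref{locally rigid duality}) at $\mathcal{N}=\mathcal{A}$ and check, using the triangle identities for the self-duality of $\mathcal{A}$, that restricting an $\mathcal{A}$-linear functional $\mathcal{M}\to\mathcal{A}$ along $\Gamma_{\mathcal{A}!}$ recovers the $\mathcal{B}$-linear functional from which (\ref{self-duality}) produced it. I expect this last unwinding to be the main obstacle: one must make the monadic/ambidexterity step of (\ref{locally rigid duality}) explicit enough at $\mathcal{N}=\mathcal{A}$ to see $\Gamma_{\mathcal{A}!}$ emerge as its inverse, while keeping the coherences of the self-duality straight. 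Once the base case is settled, the reduction of the first two paragraphs makes the general statement formal.
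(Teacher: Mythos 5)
Your first two paragraphs are essentially sound and parallel the paper's last step: the paper also reduces to $\mathcal{N}=\mathcal{A}$, though it does so by writing an arbitrary $\mathcal{A}$-module $\mathcal{N}$ as a colimit of a diagram valued in the full subcategory of $\Mod{\mathcal{A}}$ spanned by $\mathcal{A}$ and using that both sides preserve colimits in $\mathcal{N}$, rather than by the $2$-categorical Morita argument you invoke (which moreover requires the transformations to be $\Mod{\mathcal{A}}$-linear rather than merely natural, and your parenthetical claim that any natural transformation between such functors ``is an equivalence'' is not correct as stated -- it holds only once one knows the component at the unit is invertible). You also correctly identify what the statement becomes at $\mathcal{N}=\mathcal{A}$: that (\ref{Gamma}), i.e.\ postcomposition with $\Gamma_{\mathcal{A}!}$, is inverse to the self-duality (\ref{self-duality}) for an arbitrary $\mathcal{M}$ dualisable over $\mathcal{B}$.

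The genuine gap is that this base case is never proved: your third paragraph is a plan to ``trace the ambidexterity equivalence'' and ``check, using the triangle identities,'' and you yourself flag that unwinding as the main obstacle. That unwinding is the entire non-formal content of the proposition, so deferring it leaves the proof incomplete; worse, making the monadic step of (\ref{locally rigid duality}) explicit is exactly what the paper's argument is designed to avoid. The paper's key move, which you are missing, is a \emph{second} naturality reduction, this time in the variable $\mathcal{M}$: for every object $m$ of $\mathcal{M}$, regarded as an $\mathcal{A}$-linear map $m\colon \mathcal{A}\to\mathcal{M}$, both equivalences are compatible with precomposition by $m$ (this is the commutative diagram in the paper's proof, using the naturality of (\ref{locally rigid duality}) and of the displayed composite), which reduces the case of general $\mathcal{M}$ with $\mathcal{N}=\mathcal{A}$ to the case $\mathcal{M}=\mathcal{N}=\mathcal{A}$, where the two equivalences visibly coincide and there is nothing left to compute. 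Replacing your third paragraph by this reduction in $\mathcal{M}$ would complete your argument.
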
 

\begin{proof} 
Clearly, the two equivalences are the same when $\mathcal{M} =\mathcal{N} =  \mathcal{A}$. The case $\mathcal{N} = \mathcal{A}$ may then be deduced from the naturality of (\ref{locally rigid duality}), as for any object $m$ in $\mathcal{M}$ one obtains a commutative diagram 
\[\begin{tikzcd}
	{[\mathcal{A}, \mathcal{A}]_\mathcal{A}} & {[\mathcal{A}, \mathcal{A}]_\mathcal{A} \otimes_\mathcal{A} \mathcal{A}} & {[\mathcal{A}, \mathcal{B}]_\mathcal{B} \otimes_\mathcal{A} \mathcal{A}} \\
	{[\mathcal{M}, \mathcal{A}]_\mathcal{A}} & {[\mathcal{M}, \mathcal{A}]_\mathcal{A} \otimes_\mathcal{A} \mathcal{A}} & {[\mathcal{M}, \mathcal{B}]_\mathcal{B} \otimes_\mathcal{A} \mathcal{A}}
	\arrow["\simeq", from=1-1, to=1-2]
	\arrow["\simeq", from=1-2, to=1-3]
	\arrow[from=2-1, to=1-1]
	\arrow["\simeq", from=2-1, to=2-2]
	\arrow[from=2-2, to=1-2]
	\arrow["\simeq", from=2-2, to=2-3]
	\arrow[from=2-3, to=1-3]
\end{tikzcd}\]
where the vertical arrows are given by precomposing with $m: \mathcal{A} \to \mathcal{M}$. Finally, for the general case it is enough to observe that any $\mathcal{A}$-module $\mathcal{N}$ may be written as a colimit of diagram taking limits in the subcategory of $\Mod{\mathcal{A}}$ spanned by $\mathcal{A}$, and that $\otimes_\mathcal{A}$ commutes with colimits.  
\end{proof} 

By the construction of (\ref{locally rigid duality}) one obtains the following corollary:

\begin{corollary}	\label{triangle}
For any $\mathcal{A}$-modules $\mathcal{M}, \mathcal{N}$ with $\mathcal{M}$ dualisable as a $\mathcal{B}$-module the diagram 
\[\begin{tikzcd}
	& {[\mathcal{M}, \mathcal{B}]_\mathcal{B} \otimes_\mathcal{B}\mathcal{N}} \\
	{[\mathcal{M}, \mathcal{A}]_\mathcal{A} \otimes_\mathcal{A} \mathcal{N}} && {[\mathcal{M}, \mathcal{B}]_\mathcal{B} \otimes_\mathcal{A} \mathcal{N}}
	\arrow[from=1-2, to=2-1]
	\arrow[from=1-2, to=2-3]
	\arrow["{(\Gamma_{\mathcal{V} !} \circ \emptyinput , \id_\mathcal{N})}", from=2-1, to=2-3]
\end{tikzcd}\]
commutes, where the diagonal arrows are the monadic left adjoints of (\ref{free forget}) applied to the appropriate choice of $\mathcal{V}$ and $\mathcal{W}$ as in the discussion preceding (\ref{locally rigid duality}). 
\end{corollary}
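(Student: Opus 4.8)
The plan is to recognise the two diagonal arrows as one and the same monadic left adjoint, distinguished only by how its target is subsequently identified, and then to read off commutativity from the preceding Proposition.

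First I would name the vertices. Writing $\mathcal{P} := [\mathcal{M}, \mathcal{B}]_\mathcal{B} \otimes_\mathcal{B} \mathcal{N} = \mathcal{M}^\vee \otimes_\mathcal{B} \mathcal{N}$ for the apex, this is exactly the $\mathcal{A} \otimes_\mathcal{B} \mathcal{A}$-module to which the discussion preceding (\ref{locally rigid duality}) is applied, with $\mathcal{W} = \mathcal{A} \otimes_\mathcal{B} \mathcal{A}$ and $\mathcal{V} = \mathcal{A}$. The relevant monadic left adjoint of (\ref{free forget}) is then $\mathcal{A} \otimes_{\mathcal{A} \otimes_\mathcal{B} \mathcal{A}} \emptyinput$, whose value on $\mathcal{P}$ is $\mathcal{A} \otimes_{\mathcal{A} \otimes_\mathcal{B} \mathcal{A}} \mathcal{P} \simeq \mathcal{M}^\vee \otimes_\mathcal{A} \mathcal{N}$, the target of the first equivalence in the chain (\ref{locally rigid duality}). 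I will denote this common free functor, viewed as an arrow out of $\mathcal{P}$, by $F$.

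Next I would observe that both diagonals are obtained from $F$, differing only in the description of its target. The right-hand diagonal identifies $\mathcal{A} \otimes_{\mathcal{A} \otimes_\mathcal{B} \mathcal{A}} \mathcal{P}$ with the bottom-right vertex $[\mathcal{M}, \mathcal{B}]_\mathcal{B} \otimes_\mathcal{A} \mathcal{N} = \mathcal{M}^\vee \otimes_\mathcal{A} \mathcal{N}$ using this first equivalence alone; call this identification $\phi_a$. The left-hand diagonal instead continues along the remaining equivalences of (\ref{locally rigid duality}), identifying the target with $[\mathcal{M}, \mathcal{N}]_\mathcal{A}$ and then, since $\mathcal{M}$ is dualisable over $\mathcal{B}$ and hence over $\mathcal{A}$, with the bottom-left vertex $[\mathcal{M}, \mathcal{A}]_\mathcal{A} \otimes_\mathcal{A} \mathcal{N}$; call this composite identification $\phi_b$. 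Thus the right and left diagonals are $\phi_a \circ F$ and $\phi_b \circ F$ respectively.

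Finally I would invoke the preceding Proposition, which states precisely that the composite of the remaining equivalences of (\ref{locally rigid duality}), read as $[\mathcal{M}, \mathcal{A}]_\mathcal{A} \otimes_\mathcal{A} \mathcal{N} \to [\mathcal{M}, \mathcal{B}]_\mathcal{B} \otimes_\mathcal{A} \mathcal{N}$, is the bottom arrow $(\Gamma_{\mathcal{A}!} \circ \emptyinput,\, \id_\mathcal{N})$; in the notation above this says that $\phi_a \circ \phi_b^{-1}$ equals the bottom arrow. Composing, the bottom arrow after the left diagonal is $(\phi_a \circ \phi_b^{-1}) \circ (\phi_b \circ F) = \phi_a \circ F$, which is the right diagonal, so the triangle commutes. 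The only genuine content is the decomposition of (\ref{locally rigid duality}) packaged in the preceding Proposition; once that is in hand, the corollary is a matter of matching the three arrows to the legs of the construction, and I expect no obstacle beyond tracking these identifications coherently — in particular the passage between $[\mathcal{M},\mathcal{A}]_\mathcal{A}$ and $\mathcal{M}^\vee$ supplied by (\ref{self-duality}) and (\ref{Gamma}).
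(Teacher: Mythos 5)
Your proof is correct and follows essentially the same route as the paper, whose entire argument is the one-line remark that the corollary follows ``by the construction of (\ref{locally rigid duality})'' together with the preceding proposition: both diagonals are the free functor $F$ on $\mathcal{P} = [\mathcal{M},\mathcal{B}]_\mathcal{B}\otimes_\mathcal{B}\mathcal{N}$ up to the identifications of its target built into the chain (\ref{locally rigid duality}) (here one uses that the Barr--Beck--Lurie equivalence $\mathcal{A}\otimes_{\mathcal{A}\otimes_\mathcal{B}\mathcal{A}}\mathcal{P}\simeq[\mathcal{A},\mathcal{P}]_{\mathcal{A}\otimes_\mathcal{B}\mathcal{A}}$ intertwines the two monadic left adjoints, which is the precise content of your phrase ``differing only in the description of its target''), and the proposition identifies the discrepancy $\phi_a\circ\phi_b^{-1}$ with the bottom arrow $(\Gamma_{\mathcal{A}!}\circ\emptyinput,\id_\mathcal{N})$. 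No gap.
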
 

\subsection{Exceptional pullbacks and pushforward functors}	\label{exceptional}

Let $\mathcal{A} \xleftarrow{f^*} \mathcal{B} \xleftarrow{g^*} \mathcal{C}$ be morphisms in $\mathbf{CAlg}_{\mathbf{Pr}^L}$ such that $\mathcal{A}$ and $\mathcal{B}$ are locally rigid over $\mathcal{C}$. In this subsection $(\emptyinput)^\vee$ denotes the duality functor in $\Mod{\mathcal{C}}$. 

Consider a $\mathcal{C}$-linear functor between dualisable $\mathcal{C}$-modules $\mathcal{M} \leftarrow \mathcal{N}: h^*$, then its image under the canonical equivalence $[\mathcal{N}, \mathcal{M}]_\mathcal{C} \simeq [\mathcal{M}^\vee, \mathcal{N}^\vee]_\mathcal{C}$ provided by duality in $\Mod{\mathcal{C}}$ w.r.t.\ $\otimes_\mathcal{C}$ may be written explicitly as the functor $(h^*)^\vee: \mathcal{M}^\vee \to \mathcal{N}^\vee, \enskip (\lambda: \mathcal{M} \to \mathcal{C}) \mapsto (\lambda \circ h^*: \mathcal{N} \to \mathcal{C}$). The \Emph{exceptional pushforward} $f_!: \mathcal{A} \to \mathcal{B}$ is the unique functor making the diagram
\[\begin{tikzcd}
	{\mathcal{A}} & {[\mathcal{A},\mathcal{C}]_\mathcal{C}} \\
	{\mathcal{B}} & {[\mathcal{B},\mathcal{C}]_\mathcal{C}}
	\arrow["\simeq", from=1-1, to=1-2]
	\arrow["{f_!}"', from=1-1, to=2-1]
	\arrow["{(f^*)^\vee}", from=1-2, to=2-2]
	\arrow["\simeq", from=2-1, to=2-2]
\end{tikzcd}\]

commute, where the horizontal maps are given by (\ref{self-duality}). Observe that the equivalence $\mathcal{A} \xrightarrow{\simeq} [\mathcal{A},\mathcal{C}]_\mathcal{C}$ is given by $a \mapsto \Gamma_{\mathcal{A}!}(a \otimes_\mathcal{A} \emptyinput)$, so that $f_!$ is uniquely characterised as the functor making the formula 
\begin{equation}	\label{exceptional characterisation}
\Gamma_{\mathcal{B}!}(f_!(a) \otimes_\mathcal{B} \emptyinput) \simeq \Gamma_{\mathcal{A}!}(a \otimes_\mathcal{A} f^*(\emptyinput))
\end{equation} 
hold. The functor $(f^*)^\vee$ is cocontinuous, and thus also $f_!$. The right adjoint of $f_!$ is called the \Emph{exceptional pullback} of $f^*$ and is denoted by $f^!$. 

\begin{remark} 
Let $f: X \to Y$ be a morphism in a suitable site of geometric objects, equipped with a six functor formalism valued in locally rigid algebras, then using the above construction it is not necessary to be able to factor $f$ into an open embedding followed by a proper map in order to define the exceptional pushforward functor $f_!$ (which is how it is usually constructed). \qede
\end{remark} 

\begin{proposition}	\label{linear} 
The exceptional pushforward $f_!$ is $f^*$-linear. 
\end{proposition}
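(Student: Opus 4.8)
The plan is to read off $f^*$-linearity directly from the defining diagram of $f_!$, by exhibiting $f_!$ as a composite of three $\mathcal{B}$-linear functors, where $\mathcal{A}$ is regarded as a $\mathcal{B}$-module via $f^*$. Concretely, $f^*$-linearity amounts to the projection formula $f_!(f^*(b) \otimes_\mathcal{A} a) \simeq b \otimes_\mathcal{B} f_!(a)$, and the defining square presents $f_!$ as the composite
$$\mathcal{A} \xrightarrow{\simeq} [\mathcal{A},\mathcal{C}]_\mathcal{C} \xrightarrow{(f^*)^\vee} [\mathcal{B},\mathcal{C}]_\mathcal{C} \xrightarrow{\simeq} \mathcal{B},$$
so it suffices to promote each factor to a morphism in $\Mod{\mathcal{B}}$.

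First I would upgrade the outer self-duality equivalences to $\mathcal{B}$-linear equivalences. For the top map $a \mapsto \Gamma_{\mathcal{A}!}(a \otimes_\mathcal{A} \emptyinput)$ this is the content of (\ref{self-duality}): it identifies the $\mathcal{C}$-linear dual $[\mathcal{A},\mathcal{C}]_\mathcal{C}$ with the $\mathcal{A}$-linear dual $[\mathcal{A},\mathcal{A}]_\mathcal{A} \simeq \mathcal{A}$, and this identification is $\mathcal{A}$-linear by the construction of (\ref{locally rigid duality}); restricting along $f^*$ renders it $\mathcal{B}$-linear. The bottom map is the same construction for $\mathcal{B}$ over $\mathcal{C}$, hence $\mathcal{B}$-linear.

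Next I would check that $(f^*)^\vee$ is $f^*$-linear, where $[\mathcal{A},\mathcal{C}]_\mathcal{C}$ carries the $\mathcal{B}$-module structure induced by the $\mathcal{B}$-action on $\mathcal{A}$ through $f^*$. Since $f^*$ is symmetric monoidal, $(f^*)^\vee$ is the $\mathcal{C}$-linear dual of a map of $\mathcal{C}$-algebras, and its $\mathcal{B}$-linearity dualises the fact that $f^*$ is a morphism of $\mathcal{B}$-algebras; on evaluations this is simply $(f^*)^\vee(b\cdot\lambda)(y) = \lambda(f^*(b\cdot y)) = (b\cdot(f^*)^\vee\lambda)(y)$, which I would make coherent by presenting $(f^*)^\vee$ via restriction of scalars. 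Composing the three $\mathcal{B}$-linear factors then gives the $f^*$-linearity of $f_!$.

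The hard part is coherence rather than the identities: the pointwise checks above are immediate, but concluding in $\mathbf{CAlg}_{\mathbf{Pr}^L}$ requires genuine $\Mod{\mathcal{B}}$-morphisms, not just natural isomorphisms of underlying functors. I expect this to be cleanest through the characterization (\ref{exceptional characterisation}): since $b \mapsto \Gamma_{\mathcal{B}!}(b \otimes_\mathcal{B} \emptyinput)$ is an equivalence $\mathcal{B} \simeq [\mathcal{B},\mathcal{C}]_\mathcal{C}$, it reflects equivalences, so it is enough to compare the images of $f_!(f^*(b) \otimes_\mathcal{A} a)$ and $b \otimes_\mathcal{B} f_!(a)$. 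Both are computed to be $\Gamma_{\mathcal{A}!}(a \otimes_\mathcal{A} f^*(b \otimes_\mathcal{B} \emptyinput))$ — the former by (\ref{exceptional characterisation}) together with monoidality of $f^*$, the latter by (\ref{exceptional characterisation}) applied to the argument $b \otimes_\mathcal{B} \emptyinput$ — and this identification is natural in every variable, which is precisely the data upgrading the equivalence to the sought $\mathcal{B}$-linear structure.
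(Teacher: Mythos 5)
Your closing paragraph is precisely the paper's proof: appealing to the characterisation (\ref{exceptional characterisation}) and the fact that $b \mapsto \Gamma_{\mathcal{B}!}(b \otimes_\mathcal{B} \emptyinput)$ is an equivalence (hence reflects equivalences), both $f_!\big(f^*(b) \otimes_\mathcal{A} a\big)$ and $b \otimes_\mathcal{B} f_!(a)$ are identified with $\Gamma_{\mathcal{A}!}\big(a \otimes_\mathcal{A} f^*(b \otimes_\mathcal{B} \emptyinput)\big)$, naturally in all variables, which is exactly the chain of equivalences the paper writes down. The first two paragraphs, attempting to factor $f_!$ through its defining square into $\mathcal{B}$-linear pieces, are scaffolding you rightly set aside for coherence reasons; the argument you actually rest on is correct and coincides with the paper's.
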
 

\begin{proof} 
Using (\ref{exceptional characterisation}), for all objects $a$ in $\mathcal{A}$ and $b$ in $\mathcal{B}$ one has 
	$$	\Gamma_{\mathcal{B}!}(b \otimes_\mathcal{B} f_!(a) \otimes_\mathcal{B} \emptyinput)	\simeq	\Gamma_{\mathcal{A}!}(a \otimes_\mathcal{B} f^*(b \otimes_\mathcal{B} \emptyinput))
																			\simeq	\Gamma_{\mathcal{A}!}(f^*(b) \otimes_\mathcal{A} a \otimes_\mathcal{A} f^*(\emptyinput)).$$
\end{proof}

\section{Proper functors}	\label{Proper functors}

Throughout this section $\mathcal{A} \xleftarrow{f^*} \mathcal{B} \xleftarrow{g^*} \mathcal{C}$ denote morphisms in $\mathbf{CAlg}_{\mathbf{Pr}^L}$, with $\mathcal{A}$ and $\mathcal{B}$ locally rigid over $\mathcal{C}$. Moreover, $f_!$ denotes the $\mathcal{C}$-linear dual of $f^*$ (see \S \ref{exceptional}). 

\begin{definition} 
The functor $\mathcal{A} \xleftarrow{f^*} \mathcal{B}$ is \Emph{proper} if its right adjoint is $\mathcal{B}$-linear. \qede
\end{definition} 

The goal of this section is to prove the following theorem, justifying the term \emph{proper} by deducing the characteristic property of proper maps in the context of six functor formalisms: 

\begin{theorem}	\label{proper pushforward} 
If $f^*$ is proper, then there is a canonical equivalence $f_! \simeq f_*$. 
\end{theorem}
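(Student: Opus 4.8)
The plan is to verify the characterising formula (\ref{exceptional characterisation}) directly for the candidate functor $f_*$, i.e.\ to show that $\Gamma_{\mathcal{B}!}\big(f_*(a) \otimes_\mathcal{B} \emptyinput\big) \simeq \Gamma_{\mathcal{A}!}\big(a \otimes_\mathcal{A} f^*(\emptyinput)\big)$ for all $a$ in $\mathcal{A}$, since $f_!$ is uniquely determined by this property. The hypothesis that $f^*$ is proper means exactly that its right adjoint $f_*$ is $\mathcal{B}$-linear, and I expect this to be the crucial input that lets the two sides match up.

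First I would expand the right-hand side. For $b$ in $\mathcal{B}$, the projection formula for the $\mathcal{B}$-linear functor $f_*$ — i.e.\ $\mathcal{B}$-linearity applied to the adjunction $f^* \dashv f_*$ — gives a natural isomorphism $f_*\big(a \otimes_\mathcal{A} f^*(b)\big) \simeq f_*(a) \otimes_\mathcal{B} b$. The strategy is then to recognise that the composite $\Gamma_{\mathcal{A}!} \circ (\emptyinput)$ should factor through $\Gamma_{\mathcal{B}!} \circ f_*$. Concretely, I would argue that $\Gamma_{\mathcal{A}!} \simeq \Gamma_{\mathcal{B}!} \circ f_*$ as functors $\mathcal{A} \to \mathcal{C}$: both are $\mathcal{C}$-linear functors out of $\mathcal{A}$, and one should identify $\Gamma_{\mathcal{A}!}$ via its characterisation as the image of $\mathbf{1}_\mathcal{B}$ under $\mathcal{B} \simeq [\mathcal{B},\mathcal{C}]_\mathcal{C} \xrightarrow{\simeq} [\mathcal{B},\mathcal{A}]_\mathcal{A}$ composed appropriately, using that $\Gamma_{\mathcal{B}!}$ is the exceptional global sections functor of $\mathcal{B}$ over $\mathcal{C}$ and that $f_*$ is the right adjoint to the algebra map $f^*$.

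Granting the identification $\Gamma_{\mathcal{A}!} \simeq \Gamma_{\mathcal{B}!} \circ f_*$, the computation closes as follows:
\begin{align*}
\Gamma_{\mathcal{A}!}\big(a \otimes_\mathcal{A} f^*(b)\big)
&\simeq \Gamma_{\mathcal{B}!}\big(f_*(a \otimes_\mathcal{A} f^*(b))\big) \\
&\simeq \Gamma_{\mathcal{B}!}\big(f_*(a) \otimes_\mathcal{B} b\big),
\end{align*}
where the first step is the identification and the second is the projection formula. Comparing with (\ref{exceptional characterisation}) and using that $\Gamma_{\mathcal{B}!}(\emptyinput \otimes_\mathcal{B} \emptyinput)$ detects the self-duality equivalence $\mathcal{B} \xrightarrow{\simeq} [\mathcal{B},\mathcal{C}]_\mathcal{C}$, one concludes $f_! \simeq f_*$.

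The main obstacle I anticipate is establishing the key identification $\Gamma_{\mathcal{A}!} \simeq \Gamma_{\mathcal{B}!} \circ f_*$ rigorously, i.e.\ showing that the exceptional global sections functors are compatible with the proper pushforward. This is essentially a statement that $(f^*)^\vee$, the $\mathcal{C}$-linear dual defining $f_!$, is carried to $f_*$ under the self-dualities of $\mathcal{A}$ and $\mathcal{B}$, and one must check that $\mathcal{B}$-linearity of $f_*$ is precisely what makes the relevant dualisability/counit data compatible — likely by the same kind of diagram-chase used in Proposition \ref{rigid equivalent}, tracking how the counit $\Delta^* $ of $\mathcal{A}$ interacts with $f_*$. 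Once this compatibility is in place, the rest is a formal consequence of the projection formula and the uniqueness in (\ref{exceptional characterisation}).
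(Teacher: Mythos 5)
Your reduction is sound, and it in fact mirrors the paper's endgame: the paper also concludes by establishing $\Gamma_{\mathcal{A}!}^{\mathcal{C}} \simeq \Gamma_{\mathcal{B}!}^{\mathcal{C}} \circ f_*$ and then combining this with the projection formula supplied by $\mathcal{B}$-linearity of $f_*$ and the uniqueness in (\ref{exceptional characterisation}). The genuine gap is exactly the step you defer: the identification $\Gamma_{\mathcal{A}!} \simeq \Gamma_{\mathcal{B}!} \circ f_*$ is where all the content of the theorem lives, and your proposed route to it does not work as stated. It is near-circular: you gloss this identification as the statement that $(f^*)^\vee$ is carried to $f_*$ under the self-dualities of $\mathcal{A}$ and $\mathcal{B}$ --- but that statement \emph{is} the definition of $f_! \simeq f_*$, i.e.\ the theorem itself. (The identification is in fact strictly weaker --- it amounts to the theorem after composing with $\Gamma_{\mathcal{B}!}$, which is precisely why the projection formula is needed to upgrade it --- but proposing to prove it by checking compatibility of the dualisability data is proposing to prove the theorem by the theorem.)

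Moreover, your suggestion to run ``the same kind of diagram-chase as in Proposition \ref{rigid equivalent}'' presupposes something that is not among the hypotheses: Proposition \ref{rigid equivalent}, and indeed the very existence of $\Gamma_{\mathcal{A}!}^{\mathcal{B}}$, requires $\mathcal{A}$ to be locally rigid \emph{over $\mathcal{B}$}, whereas you are only given that $\mathcal{A}$ and $\mathcal{B}$ are locally rigid over $\mathcal{C}$. The paper therefore needs two substantive preliminary propositions before it can argue as you intend: (i) properness (equivalently, $f^*$ admits a $\mathcal{B}$-internal, hence a fortiori $\mathcal{C}$-internal, right adjoint) implies that $\mathcal{A} \leftarrow \mathcal{B}$ is itself locally rigid, proved via a claim about the quotient functor $\mathcal{A} \otimes_{\mathcal{C}} \mathcal{A} \to \mathcal{A} \otimes_{\mathcal{B}} \mathcal{A}$ together with lemmas from \cite{mR2024}; only then does Proposition \ref{rigid equivalent} yield $\Gamma_{\mathcal{A}!}^{\mathcal{B}} \simeq f_*$; and (ii) a transitivity formula $\Gamma_{\mathcal{A}!}^{\mathcal{C}} \simeq \Gamma_{\mathcal{B}!}^{\mathcal{C}} \circ \Gamma_{\mathcal{A}!}^{\mathcal{B}}$ relating exceptional global sections over the two bases, proved by a monadicity (Barr--Beck--Lurie) argument using Corollary \ref{triangle}. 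Neither step is a routine chase, and your sketch acknowledges neither; until they are supplied, your proposal reduces the theorem to an unproved statement of essentially equal depth.
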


The proof of this theorem requires two preliminary results. 

\begin{proposition} 
Assume that $\mathcal{A} \leftarrow \mathcal{B}: f^*$ is locally rigid, then there is a canonical equivalence  $\Gamma_{\mathcal{A}!}^\mathcal{C} \simeq \Gamma_{\mathcal{B}!}^\mathcal{C} \circ \Gamma_{\mathcal{A}!}^\mathcal{B}$. 
\end{proposition}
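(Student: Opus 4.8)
The plan is to reduce the statement to the contravariant functoriality of $\mathcal{C}$-linear duality, together with a single compatibility between duality over $\mathcal{C}$ and duality over $\mathcal{B}$. First I would recall that, by the construction of \S\ref{exceptional} applied with the base taken to be the target, each exceptional global sections functor is the linear dual of the relevant structure map transported along the self-dualities of the form (\ref{self-duality}): writing $f^*\colon\mathcal{B}\to\mathcal{A}$ and $g^*\colon\mathcal{C}\to\mathcal{B}$ for the structure maps, $\Gamma_{\mathcal{A}!}^{\mathcal{C}}$ is the image of $f^*\circ g^*$ under $[-,\mathcal{C}]_{\mathcal{C}}$, $\Gamma_{\mathcal{B}!}^{\mathcal{C}}$ the image of $g^*$, and $\Gamma_{\mathcal{A}!}^{\mathcal{B}}$ the image of $f^*$ under $[-,\mathcal{B}]_{\mathcal{B}}$. (The hypothesis that $\mathcal{A}\leftarrow\mathcal{B}\colon f^*$ is locally rigid is exactly what makes $\Gamma_{\mathcal{A}!}^{\mathcal{B}}$ and the self-duality $\mathcal{A}\simeq[\mathcal{A},\mathcal{B}]_{\mathcal{B}}$ available.) Since the structure map of $\mathcal{A}$ over $\mathcal{C}$ factors as $\mathcal{C}\xrightarrow{g^*}\mathcal{B}\xrightarrow{f^*}\mathcal{A}$, the identity $(f^*\circ g^*)^{\vee_{\mathcal{C}}}\simeq(g^*)^{\vee_{\mathcal{C}}}\circ(f^*)^{\vee_{\mathcal{C}}}$ gives $\Gamma_{\mathcal{A}!}^{\mathcal{C}}\simeq\Gamma_{\mathcal{B}!}^{\mathcal{C}}\circ(f^*)^{\vee_{\mathcal{C}}}$, where $(f^*)^{\vee_{\mathcal{C}}}\colon\mathcal{A}\to\mathcal{B}$ is the $\mathcal{C}$-linear dual of $f^*$ (equivalently the exceptional pushforward of \S\ref{exceptional}; this also recovers (\ref{exceptional characterisation}) on setting the free variable to $\mathbf{1}_{\mathcal{B}}$). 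This step is clean because the self-duality of $\mathcal{B}$ used to define $\Gamma_{\mathcal{B}!}^{\mathcal{C}}$ is literally the intermediate transport. Everything thus reduces to the claim that the $\mathcal{C}$-linear and $\mathcal{B}$-linear duals of $f^*$ coincide, i.e.\ $(f^*)^{\vee_{\mathcal{C}}}\simeq(f^*)^{\vee_{\mathcal{B}}}=\Gamma_{\mathcal{A}!}^{\mathcal{B}}$.

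For this claim I would invoke the comparison between duality over $\mathcal{C}$ and duality over $\mathcal{B}$. The restriction--coinduction adjunction for $\mathcal{C}\to\mathcal{B}$ supplies a natural equivalence $[\mathcal{M},\mathcal{C}]_{\mathcal{C}}\simeq[\mathcal{M},[\mathcal{B},\mathcal{C}]_{\mathcal{C}}]_{\mathcal{B}}$ for $\mathcal{B}$-modules $\mathcal{M}$, and the Frobenius self-duality $\mathcal{B}\simeq[\mathcal{B},\mathcal{C}]_{\mathcal{C}}$ of $\mathcal{B}$ over $\mathcal{C}$ turns it into a natural equivalence $[\mathcal{M},\mathcal{B}]_{\mathcal{B}}\simeq[\mathcal{M},\mathcal{C}]_{\mathcal{C}}$ acting on morphisms by postcomposition with $\Gamma_{\mathcal{B}!}^{\mathcal{C}}$; this is precisely Corollary \ref{triangle} and the equivalences (\ref{Gamma}) applied to $\mathcal{B}$ over $\mathcal{C}$. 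Naturality evaluated on $f^*\colon\mathcal{B}\to\mathcal{A}$ then produces the square relating $(f^*)^{\vee_{\mathcal{B}}}$ and $(f^*)^{\vee_{\mathcal{C}}}$, and unwinding it at $\mathbf{1}_{\mathcal{A}}$ (using that $\mathrm{id}_{\mathcal{A}}\in[\mathcal{A},\mathcal{A}]_{\mathcal{A}}$ corresponds to the respective $\Gamma_{\mathcal{A}!}$ under (\ref{Gamma})) yields $(f^*)^{\vee_{\mathcal{C}}}\simeq\Gamma_{\mathcal{A}!}^{\mathcal{B}}$.

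The step I expect to be the main obstacle is verifying that this comparison is compatible with the self-duality identifications (\ref{self-duality}) of $\mathcal{A}$ at both levels --- concretely, that the single equivalence $[\mathcal{A},\mathcal{A}]_{\mathcal{A}}\simeq[\mathcal{A},\mathcal{C}]_{\mathcal{C}}$ of (\ref{Gamma}) for $\mathcal{A}$ over $\mathcal{C}$ factors as the $\mathcal{A}$-over-$\mathcal{B}$ equivalence $[\mathcal{A},\mathcal{A}]_{\mathcal{A}}\simeq[\mathcal{A},\mathcal{B}]_{\mathcal{B}}$ followed by the $\mathcal{B}$-over-$\mathcal{C}$ equivalence $[\mathcal{A},\mathcal{B}]_{\mathcal{B}}\simeq[\mathcal{A},\mathcal{C}]_{\mathcal{C}}$. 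Phrased naively through the $\Gamma$-description this factorisation is exactly the assertion to be proved, so the care lies in deriving it instead from the primitive monadic construction (\ref{locally rigid duality}) of the self-dualities --- that is, from the transitivity of the underlying restriction--coinduction adjunctions along $\mathcal{C}\to\mathcal{B}\to\mathcal{A}$ and the associativity of the relevant base changes --- rather than from the formula $\Gamma_{\mathcal{A}!}^{\mathcal{C}}\simeq\Gamma_{\mathcal{B}!}^{\mathcal{C}}\circ\Gamma_{\mathcal{A}!}^{\mathcal{B}}$ itself, on pain of circularity. I would isolate this as a lemma on the transitivity of (\ref{self-duality}) and discharge it by a diagram chase through (\ref{locally rigid duality}), or by appealing to the functoriality of the duality-comparison established in \cite{mR2024}.
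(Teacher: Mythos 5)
Your opening reduction is sound: the contravariant functoriality of $\mathcal{C}$-linear duality, together with the observation that the self-duality $\mathcal{B}\simeq[\mathcal{B},\mathcal{C}]_\mathcal{C}$ used to define $\Gamma_{\mathcal{B}!}^{\mathcal{C}}$ is literally the intermediate transport in the definition of $f_!$, gives $\Gamma_{\mathcal{A}!}^{\mathcal{C}}\simeq\Gamma_{\mathcal{B}!}^{\mathcal{C}}\circ f_!$ (equivalently, evaluate (\ref{exceptional characterisation}) at $\mathbf{1}_\mathcal{B}$), so the proposition is equivalent to the single claim $f_!\simeq\Gamma_{\mathcal{A}!}^{\mathcal{B}}$. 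But at this point your proposal stops being a proof. As you yourself diagnose, the remaining claim---that the over-$\mathcal{C}$ self-duality of $\mathcal{A}$ factors as the over-$\mathcal{B}$ self-duality followed by postcomposition with $\Gamma_{\mathcal{B}!}^{\mathcal{C}}$---is, when read through (\ref{Gamma}), \emph{exactly} the statement to be proved. Your naturality square for $f^*$ commutes for trivial reasons (pre- and postcomposition commute); the entire content sits in the compatibility of its horizontal edges with the two self-duality identifications of $\mathcal{A}$, which is the ``transitivity lemma'' you defer. Deferring it to ``a diagram chase through (\ref{locally rigid duality})'' or to \cite{mR2024} defers all of the mathematical content of the proposition: what remains written down is formal bookkeeping equivalent to restating the proposition, not an argument for it.

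Moreover, the chase you sketch---``transitivity of the restriction--coinduction adjunctions along $\mathcal{C}\to\mathcal{B}\to\mathcal{A}$ and associativity of base changes''---is aimed slightly wrong. The two self-dualities of $\mathcal{A}$ are produced by (\ref{locally rigid duality}) from monadic adjunctions over two \emph{different} algebras, $\mathcal{A}\otimes_\mathcal{B}\mathcal{A}$ and $\mathcal{A}\otimes_\mathcal{C}\mathcal{A}$, and comparing them is not a matter of formal transitivity: it requires a descent argument. That is precisely what the paper's proof supplies. By (\ref{Gamma}) the proposition becomes the commutativity of the triangle formed by the equivalences $[\mathcal{A},\mathcal{A}]_\mathcal{A}\to[\mathcal{A},\mathcal{B}]_\mathcal{B}\to[\mathcal{A},\mathcal{C}]_\mathcal{C}$ (postcomposition with $\Gamma_{\mathcal{A}!}^{\mathcal{B}}$, then $\Gamma_{\mathcal{B}!}^{\mathcal{C}}$) and $[\mathcal{A},\mathcal{A}]_\mathcal{A}\to[\mathcal{A},\mathcal{C}]_\mathcal{C}$ (postcomposition with $\Gamma_{\mathcal{A}!}^{\mathcal{C}}$); the paper then precomposes both paths with the monadic left adjoint $[\mathcal{A},\mathcal{C}]_\mathcal{C}\otimes_\mathcal{C}\mathcal{A}\to[\mathcal{A},\mathcal{A}]_\mathcal{A}\otimes_\mathcal{A}\mathcal{A}$ coming from (\ref{free forget}), uses Corollary \ref{triangle} and the functoriality of tensoring to identify both composites with the canonical arrow $[\mathcal{A},\mathcal{C}]_\mathcal{C}\otimes_\mathcal{C}\mathcal{A}\to[\mathcal{A},\mathcal{C}]_\mathcal{C}\otimes_\mathcal{A}\mathcal{A}$, and finally invokes the Barr--Beck--Lurie theorem together with the universal property of the category of algebras over a monad to descend this agreement to the triangle itself. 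If you want to complete your proposal, this monadic-descent step is what your transitivity lemma needs; Corollary \ref{triangle} is the tool the paper provides precisely for that purpose, and no purely formal appeal to associativity of base change will substitute for it.
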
 

\begin{proof} 
By (\ref{Gamma}) the proposition is equivalent to the statement that the uppermost $2$-cell in the diagram 
\[\begin{tikzcd}
	{[\mathcal{A},\mathcal{A}]_\mathcal{A} \otimes_\mathcal{A} \mathcal{A}} & {[\mathcal{A},\mathcal{B}]_\mathcal{B} \otimes_\mathcal{A}\mathcal{A}} & {[\mathcal{A},\mathcal{C}]_\mathcal{C} \otimes_\mathcal{A}\mathcal{A}} \\
	& {[\mathcal{A},\mathcal{B}]_\mathcal{B} \otimes_\mathcal{B}\mathcal{A}} & {[\mathcal{A},\mathcal{C}]_\mathcal{C} \otimes_\mathcal{B}\mathcal{A}} \\
	& {[\mathcal{A},\mathcal{C}]_\mathcal{C} \otimes_\mathcal{C}\mathcal{A}}
	\arrow[from=1-1, to=1-2]
	\arrow[curve={height=-18pt}, from=1-1, to=1-3]
	\arrow[from=1-2, to=1-3]
	\arrow[from=2-2, to=1-1]
	\arrow[from=2-2, to=1-2]
	\arrow[from=2-2, to=2-3]
	\arrow[from=2-3, to=1-3]
	\arrow[from=3-2, to=1-1]
	\arrow[controls={+(6,0) and +(2,-1)}, from=3-2, to=1-3]
	\arrow[from=3-2, to=2-2]
	\arrow[from=3-2, to=2-3]
\end{tikzcd}\]
commutes.

The arrows in the left- and rightmost cells are the monadic left adjoints of (\ref{free forget}) applied to appropriate choices of $\mathcal{V}$ and $\mathcal{W}$ as in the discussion preceding (\ref{locally rigid duality}). 

I will show that the precompositions of both $\Gamma_{\mathcal{A}!}^\mathcal{C}$ and $\Gamma_{\mathcal{B}!}^\mathcal{C} \circ \Gamma_{\mathcal{A}!}^\mathcal{B}$ with the leftmost arrow are equivalent to the rightmost arrow, so that the proposition follows from the Barr-Beck-Lurie theorem together with the universal property of the category of algebras over a monad (see \cite[\S 10.2]{eRdVe}). 

The outer triangle commutes by Corollary \ref{triangle}. It thus remains to show that the triangle formed by $\Gamma_{\mathcal{B}!}^\mathcal{C} \circ \Gamma_{\mathcal{A}!}^\mathcal{B}$ together with the leftmost and rightmost arrows commutes, which in turn follows from showing that the cells contained in this triangle commute: The square commutes by the functoriality of tensoring. The triangles bordering the square to the left and below commute by Corollary \ref{triangle}. The remaining two triangles commute as they exhibit, respectively, the composition of left and right adjoints to restrictions of scalars. 
\end{proof} 

\begin{proposition} 
Assume that $\mathcal{A} \leftarrow \mathcal{B}:f^*$ admits a $\mathcal{C}$-internal right adjoint, then $\mathcal{A} \leftarrow \mathcal{B}$ is a locally rigid $\mathcal{B}$-algebra. 
\end{proposition}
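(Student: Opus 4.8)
The plan is to verify the two conditions of Definition~\ref{locally rigid} for $\mathcal{A} \leftarrow \mathcal{B} : f^*$: that $\mathcal{A}$ is dualisable in $\Mod{\mathcal{B}}$, and that the relative multiplication $\Delta^*_\mathcal{B} \colon \mathcal{A} \otimes_\mathcal{B} \mathcal{A} \to \mathcal{A}$ is an $(\mathcal{A} \otimes_\mathcal{B} \mathcal{A})$-internal left adjoint. I write $\Delta^*_\mathcal{C} \colon \mathcal{A} \otimes_\mathcal{C} \mathcal{A} \to \mathcal{A}$ for the multiplication over $\mathcal{C}$, and $\Delta_{\mathcal{B}*}, \Delta_{\mathcal{C}*}$ for the respective right adjoints. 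Throughout I compare the relative situation over $\mathcal{B}$ with the one over $\mathcal{C}$, exploiting that \emph{both} $\mathcal{A}$ and $\mathcal{B}$ are locally rigid over $\mathcal{C}$, and channelling everything through the base-change functor $q \colon \mathcal{A} \otimes_\mathcal{C} \mathcal{A} \to \mathcal{A} \otimes_\mathcal{B} \mathcal{A}$. Recall that the hypothesis means precisely that $f_*$ is colimit preserving and $\mathcal{C}$-linear.

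For condition~\ref{c1} I argue by descent of dualisability. Since $\mathcal{B}$ is locally rigid over $\mathcal{C}$, restriction of scalars $\Mod{\mathcal{B}} \to \Mod{\mathcal{C}}$ both preserves and reflects dualisability (the reflection phenomenon recorded in the remark above and studied in \cite{mR2024}). The restriction of $\mathcal{A}$, viewed as a $\mathcal{B}$-module, is simply $\mathcal{A}$ regarded as a $\mathcal{C}$-module, which is dualisable by condition~\ref{c1} of the local rigidity of $\mathcal{A}$ over $\mathcal{C}$. Hence $\mathcal{A}$ is dualisable in $\Mod{\mathcal{B}}$.

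The substance lies in condition~\ref{c2}. The key observation is that $q$ is obtained from the multiplication $\Delta^*_{\mathcal{B}/\mathcal{C}} \colon \mathcal{B} \otimes_\mathcal{C} \mathcal{B} \to \mathcal{B}$ by base change along $f^* \otimes_\mathcal{C} f^*$, using the identification $\mathcal{A} \otimes_\mathcal{B} \mathcal{A} \simeq (\mathcal{A} \otimes_\mathcal{C} \mathcal{A}) \otimes_{\mathcal{B} \otimes_\mathcal{C} \mathcal{B}} \mathcal{B}$, where $\mathcal{B}$ is a $\mathcal{B} \otimes_\mathcal{C} \mathcal{B}$-algebra via $\Delta^*_{\mathcal{B}/\mathcal{C}}$. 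As $\mathcal{B}$ is locally rigid over $\mathcal{C}$, the map $\Delta^*_{\mathcal{B}/\mathcal{C}}$ is a $(\mathcal{B} \otimes_\mathcal{C} \mathcal{B})$-internal left adjoint, and internal left adjoints are carried to internal left adjoints by the symmetric monoidal base-change $2$-functor; therefore $q$ is an $(\mathcal{A} \otimes_\mathcal{C} \mathcal{A})$-internal left adjoint. Consequently $q_*$ is $(\mathcal{A} \otimes_\mathcal{C} \mathcal{A})$-linear and colimit preserving, and — since the essential image of $q$ generates $\mathcal{A} \otimes_\mathcal{B} \mathcal{A}$ under colimits — the adjunction $q \dashv q_*$ is monadic by the discussion preceding (\ref{free forget}); in particular $q_*$ is conservative and reflects colimits. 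I then descend \ref{c2} along $q_*$: from $\Delta^*_\mathcal{C} = \Delta^*_\mathcal{B} \circ q$ one gets $\Delta_{\mathcal{C}*} = q_* \circ \Delta_{\mathcal{B}*}$, so colimit preservation of $\Delta_{\mathcal{C}*}$ (local rigidity over $\mathcal{C}$) together with colimit reflection of $q_*$ forces $\Delta_{\mathcal{B}*}$ to preserve colimits. For the projection formula I evaluate the $(\mathcal{A} \otimes_\mathcal{B} \mathcal{A})$-comparison map for $\Delta_{\mathcal{B}*}$ on the generators $q(u)$ and apply $q_*$: its $(\mathcal{A} \otimes_\mathcal{C} \mathcal{A})$-linearity and the relation $q_* \Delta_{\mathcal{B}*} \simeq \Delta_{\mathcal{C}*}$ carry this to the $(\mathcal{A} \otimes_\mathcal{C} \mathcal{A})$-projection map for $\Delta_{\mathcal{C}*}$, which is an equivalence; conservativity of $q_*$ then yields the equivalence on generators, and cocontinuity of both sides propagates it to all of $\mathcal{A} \otimes_\mathcal{B} \mathcal{A}$.

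The main obstacle I anticipate is the projection-formula step: the difficulty there is coherential rather than conceptual, namely making precise that the $(\mathcal{A} \otimes_\mathcal{C} \mathcal{A})$-linear structure on $q_*$ identifies the restricted $\mathcal{B}$-linear comparison map with the $\mathcal{C}$-linear one \emph{as a map}, and not merely objectwise, so that conservativity can be invoked. Everything else reduces to the monadic reflection package already assembled in the paper. I would flag one point for a second look: in the route above the only genuine inputs are the local rigidity of $\mathcal{A}$ and $\mathcal{B}$ over $\mathcal{C}$, so I would want to confirm whether the colimit-preservation of $f_*$ is truly indispensable here, or whether it enters only through an alternative construction of the relative comultiplication out of $f_* \otimes_\mathcal{C} f_*$ and is, for this argument, the natural hypothesis suggested by the intended application rather than a strict necessity.
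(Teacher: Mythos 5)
Your proposal is correct and takes essentially the same route as the paper: both parts proceed by (i) reflecting dualisability of $\mathcal{A}$ from $\Mod{\mathcal{C}}$ to $\Mod{\mathcal{B}}$ using local rigidity of $\mathcal{B}$ over $\mathcal{C}$, and (ii) writing the quotient map $q\colon \mathcal{A} \otimes_\mathcal{C} \mathcal{A} \to \mathcal{A} \otimes_\mathcal{B} \mathcal{A}$ as the base change of the multiplication $\mathcal{B} \otimes_\mathcal{C} \mathcal{B} \to \mathcal{B}$, deducing that $q$ is an internal left adjoint with conservative right adjoint (conservativity from colimit generation), and then descending the internal-left-adjoint property of $\Delta^*_\mathcal{C} = \Delta^*_\mathcal{B} \circ q$ to $\Delta^*_\mathcal{B}$ -- the only difference being that the paper outsources your hand-made descent of cocontinuity and the projection formula along $q_*$ to citations (\cite[Lm.~1.30, Cor.~1.32]{mR2024} and \cite[Props.~C.5.5~\&~C.6.4]{dAdGdGsRnRyV2020}). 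Your closing flag is also consistent with the paper: its proof likewise never invokes the hypothesis that $f^*$ admits a $\mathcal{C}$-internal right adjoint, only the standing assumption that $\mathcal{A}$ and $\mathcal{B}$ are locally rigid over $\mathcal{C}$.
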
 

\begin{proof} 
As $\mathcal{A}$ is dualisable as a $\mathcal{C}$-module it is also dualisable as a $\mathcal{B}$-module by (\ref{locally rigid duality}). It thus remains to verify property \ref{d2} in Definition \ref{locally rigid}. By \cite[Props.~C.5.5~\&~C.6.4]{dAdGdGsRnRyV2020} it suffices to prove that $\mathcal{A} \otimes_\mathcal{B} \mathcal{A} \to \mathcal{A}$ is a $\mathcal{C}$-linear left adjoint which I deduce from the following claim. \\

\noindent \underline{Claim:} The quotient map $\mathcal{A} \otimes_\mathcal{C} \mathcal{A} \to \mathcal{A} \otimes_\mathcal{B} \mathcal{A}$ admits a $\mathcal{B} \otimes_\mathcal{C} \mathcal{B}$-internal and conservative right adjoint.	\\

Thus, $\mathcal{A} \otimes_\mathcal{B} \mathcal{A} \to \mathcal{A}$ is a $\mathcal{B} \otimes_\mathcal{C} \mathcal{B}$-internal (so a fortiori $\mathcal{C}$-internal) left adjoint by \cite[Lm.~1.30]{mR2024} and the fact that the composition of 
	$$\mathcal{A} \otimes_\mathcal{C} \mathcal{A} \to \mathcal{A} \otimes_\mathcal{B} \mathcal{A} \to \mathcal{A}$$
is an $\mathcal{A} \otimes_\mathcal{C} \mathcal{A}$-internal (so a fortiori $\mathcal{B} \otimes_\mathcal{C} \mathcal{B}$-internal) left adjoint. \\

\noindent \underline{Proof of claim:} The quotient map $\mathcal{A} \otimes_\mathcal{C} \mathcal{A} \to \mathcal{A} \otimes_\mathcal{B} \mathcal{A}$ may be written as 
\begin{equation}	\label{quotient} 
(\mathcal{A} \otimes_\mathcal{C} \mathcal{A}) \otimes_{\mathcal{B} \otimes_\mathcal{C} \mathcal{B}} (\mathcal{B} \otimes_\mathcal{C} \mathcal{B}) \to (\mathcal{A} \otimes_\mathcal{C} \mathcal{A}) \otimes_{\mathcal{B} \otimes_\mathcal{C} \mathcal{B}} \mathcal{B}.
\end{equation} 
As  $\mathcal{B}$ is locally rigid, (\ref{quotient}) admits a $\mathcal{B} \otimes_\mathcal{C} \mathcal{B}$-internal left adjoint by \cite[Cor.~1.32]{mR2024}, and because the image of (\ref{quotient}) generates $(\mathcal{A} \otimes_\mathcal{C} \mathcal{A}) \otimes_{\mathcal{B} \otimes_\mathcal{C} \mathcal{B}} \mathcal{B}$ under colimits, this right adjoint is conservative. \end{proof} 

\begin{proof}[Proof of Theorem \ref{proper pushforward}]
By Proposition \ref{rigid equivalent} one has $\Gamma_{\mathcal{A}!}^\mathcal{B} \simeq f_*$, so the theorem follows by combining the preceding two results. 
\end{proof} 

\begin{question} 
Under duality in $\Mod{\mathcal{C}}$, the functors $\Gamma_{\mathcal{A}!}^\mathcal{B}$ and $f^*$  correspond to functors $\mathcal{C} \to \mathcal{B} \otimes_\mathcal{C} \mathcal{A}$ given by the upper and lower paths of the following diagram:  
\[\begin{tikzcd}
	{\mathcal{C}} & {\mathcal{B}} && {\mathcal{A}} \\
	&&& {\mathcal{A} \otimes_\mathcal{C} \mathcal{A}} \\
	& {\mathcal{B} \otimes_\mathcal{C} \mathcal{B}} && {\mathcal{B} \otimes_\mathcal{C} \mathcal{A}}
	\arrow["{g^*}", from=1-1, to=1-2]
	\arrow["{f^*}", from=1-2, to=1-4]
	\arrow["{(\mu_\mathcal{B})_*}"', from=1-2, to=3-2]
	\arrow["{(\mu_\mathcal{A})_*}", from=1-4, to=2-4]
	\arrow["{\Gamma_{\mathcal{A}!}^\mathcal{B} \otimes \id}", from=2-4, to=3-4]
	\arrow[shorten <=12pt, shorten >=12pt, Rightarrow, from=3-2, to=1-4]
	\arrow["{\id \otimes f^*}", from=3-2, to=3-4]
\end{tikzcd}\]
Assume that $f^*$ admits a $\mathcal{B}$-internal right adjoint, so that $\Gamma_{\mathcal{A}!}^\mathcal{B} \simeq f_*$ by Proposition \ref{rigid equivalent}. Then by the symmetric monoidality of $f^*$ one obtains a mate transformation in the above diagram. Transposing to functors $\mathcal{A} \to \mathcal{B}$ yields a natural transformation $f_! \Rightarrow f_*$. Is this natural transformation the equivalence of Theorem \ref{proper pushforward}?  \qede
\end{question}

\section{Open embeddings}	\label{Open embeddings}

Let $j: U \hookrightarrow X$ be an open embedding of locally compact Hausdorff spaces. Then the exceptional pushforward functor $j_!: \Sh_U \to \Sh_X$ may be identified with $j_\sharp$, the left adjoint of $j^*$. In this section I will define open embeddings between locally rigid algebras and derive the above identification for such functors. All statements will be about algebras over $\Sp$, as the proof of Theorem \ref{oe} uses the theory of recollements, which to my knowledge has not been developed over other commutative algebras in $\mathbf{Pr}^L$. 

\begin{definition}[{\cite[Def.~2.20]{jS2022}}]	\label{recollement} 
A \Emph{recollement} is a diagram of adjoints between $\Sp$-algebras
\[\begin{tikzcd}
	{\mathcal{U}} & {\mathcal{X}} & {\mathcal{Z}}
	\arrow["{j_\sharp}", shift left=4.5, hook, from=1-1, to=1-2]
	\arrow["{j_*}"', shift right=1.5, hook, from=1-1, to=1-2]
	\arrow["{j^*}"{description}, shift right=1.5, from=1-2, to=1-1]
	\arrow["{i^*}", shift left=1.5, from=1-2, to=1-3]
	\arrow["{i^!}"', shift right=4.5, from=1-2, to=1-3]
	\arrow["{i_*}"{description}, shift left=1.5, hook', from=1-3, to=1-2]
\end{tikzcd}\]
such that 
	\begin{enumerate}[label = (\alph*)]
	\item	$i^*,j^*$ are symmetric monoidal, 
	\item	$\mathbf{Im}_{j_\sharp} = \mathbf{Ker}_{i^*}	\quad \mathbf{Ker}_{j^*} = \mathbf{Im}_{i_*}	\quad	\mathbf{Im}_{j_*} = \mathbf{Ker}_{i^!}$
	\end{enumerate} 
\qede
\end{definition} 

\begin{remark}	\label{identified} 
Observe that any of the six functors in a recollement determines all the others. \qede
\end{remark} 

\begin{definition}
A morphism $\mathcal{U} \leftarrow \mathcal{X}: j^*$ of $\Sp$-algebras is called an \Emph{open embedding} if it extends to a recollement with $j^*$ as in Definition \ref{recollement}. A functor $i^*: \mathcal{X} \to \mathcal{Z}$ is called a \Emph{closed embedding} if it extends to a recollement with $i^*$ as in Definition \ref{recollement}. \qede
\end{definition} 

\begin{proposition}[{\cite[Prop.~2.34]{jS2022}}]	\label{linear recollement}
The functors $i_*$ and $j_\sharp$ are $\mathcal{X}$-linear. 	\qed
\end{proposition}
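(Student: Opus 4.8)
The plan is to recognise $\mathcal{X}$-linearity of $i_*$ and $j_\sharp$ as instances of the projection formula and then verify the relevant comparison maps are equivalences by a joint conservativity argument. Since $i^*$ and $j^*$ are symmetric monoidal, restriction of scalars along them endows $\mathcal{Z}$ and $\mathcal{U}$ with the $\mathcal{X}$-module structures $x \cdot z = i^*(x) \otimes z$ and $x \cdot u = j^*(x) \otimes u$, while $\mathcal{X}$ acts on itself by $\otimes$. Unwinding the definitions, $\mathcal{X}$-linearity of $i_*$ amounts to the comparison map $x \otimes i_*(z) \to i_*(i^*(x) \otimes z)$ (built from the unit of $i^* \dashv i_*$ and the lax monoidal structure on $i_*$) being an equivalence for all $x \in \mathcal{X}$ and $z \in \mathcal{Z}$; dually, $\mathcal{X}$-linearity of $j_\sharp$ amounts to the map $j_\sharp(j^*(x) \otimes u) \to x \otimes j_\sharp(u)$, adjoint to the identity of $j^*(x) \otimes u \simeq j^*(x) \otimes j^* j_\sharp(u)$, being an equivalence.

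The key step is to record that the pair $(j^*, i^*)$ is jointly conservative. This is the standard gluing property of a recollement: working in the stable setting of $\Sp$-algebras, if a map has invertible image under both $j^*$ and $i^*$, then since these functors are exact its cofiber $c$ satisfies $j^* c \simeq 0$, hence $c$ lies in $\mathbf{Ker}_{j^*} = \mathbf{Im}_{i_*}$, so $c \simeq i_*(w)$; applying $i^*$ and using the counit equivalence $i^* i_* \simeq \mathrm{id}$ forces $w \simeq 0$, whence $c \simeq 0$ and the map is an equivalence. Because $j^*$ and $i^*$ are moreover strong monoidal, it suffices to check that the two comparison maps above become equivalences after applying each of them separately.

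For $i_*$ the map is sent to $0 \to 0$ by $j^*$, as $j^* i_* \simeq 0$, while applying $i^*$ and using $i^* i_* \simeq \mathrm{id}$ together with strong monoidality identifies both sides with $i^*(x) \otimes z$ and the map with the identity. For $j_\sharp$ the map is sent to $0 \to 0$ by $i^*$, as $i^* j_\sharp \simeq 0$, while applying $j^*$ and using $j^* j_\sharp \simeq \mathrm{id}$ identifies both sides with $j^*(x) \otimes u$ and the map with the identity. Joint conservativity then promotes these pointwise checks to the required equivalences. The relations $j^* i_* \simeq 0$, $i^* j_\sharp \simeq 0$, $i^* i_* \simeq \mathrm{id}$, and $j^* j_\sharp \simeq \mathrm{id}$ used throughout are precisely the gluing conditions and full faithfulness encoded in Definition \ref{recollement}.

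The main subtlety to handle carefully is coherence: one must ensure that what is being verified is genuinely the $\mathcal{X}$-linear structure transformation of the functors $i_*$ and $j_\sharp$, rather than merely an abstract objectwise equivalence, so that joint conservativity is being used to detect equivalences of natural transformations of $\mathcal{X}$-linear functors. This is ensured by constructing the comparison maps as natural transformations in all variables from the unit/counit data and the (lax resp.\ oplax) monoidal structures, and by noting that a jointly conservative pair of strong monoidal exact functors detects equivalences of morphisms in $\mathcal{X}$ functorially.
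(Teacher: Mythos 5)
Your proof is correct, but note that the paper itself does not prove this proposition at all: it is stated with a \qed and attributed to Shah (\cite[Prop.~2.34]{jS2022}), so there is no internal argument to compare against; what you have written is a self-contained proof of the cited result, and it is essentially the standard one. The two pillars are exactly right: (i) joint conservativity of $(j^*,i^*)$, which you correctly derive in the stable setting from $\mathbf{Ker}_{j^*}=\mathbf{Im}_{i_*}$ together with the counit equivalence $i^*i_*\simeq\id$ coming from full faithfulness of $i_*$; and (ii) the reduction of $\mathcal{X}$-linearity to invertibility of the canonical lax (for $i_*$) and oplax (for $j_\sharp$) structure maps, which you rightly flag as the coherence point --- since $i^*$ and $j^*$ are symmetric monoidal, $i_*$ and $j_\sharp$ carry canonical lax resp.\ oplax $\mathcal{X}$-module structures, and promoting them to morphisms in $\Mod{\mathcal{X}}$ is precisely the condition that these structure maps be equivalences, which is a property checked objectwise and hence detected by a jointly conservative pair. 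One small place where your wording elides a step: after applying $i^*$ to the projection map $x\otimes i_*(z)\to i_*(i^*(x)\otimes z)$, the map does not literally become ``the identity''; rather, the triangle identity for $i^*\dashv i_*$ shows that composing $i^*$ of the projection map with the (invertible) counit recovers the (invertible) map $i^*(x)\otimes i^*i_*(z)\to i^*(x)\otimes z$, so the conclusion follows by two-out-of-three. The same remark applies to the $j^*$-check for $j_\sharp$, using the unit of $j_\sharp\dashv j^*$. With that minor tightening your argument is complete, and it gives the reader something the paper's bare citation does not.
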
 

\begin{theorem}	\label{oe} 
Let $\mathcal{U} \leftarrow \mathcal{X}: j^*$ be an open embedding, then there exists a canonical equivalence $j_! \simeq j_\sharp$. 
\end{theorem}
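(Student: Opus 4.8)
The plan is to mirror the proof of Theorem~\ref{proper pushforward}: the defining property (\ref{exceptional characterisation}) of $j_!$ together with $\mathcal{X}$-linearity will reduce the statement to a single identity of exceptional global sections, which the recollement structure then supplies.

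First I would record that both functors in question are $\mathcal{X}$-linear and cocontinuous: $j_!$ by Proposition~\ref{linear}, and $j_\sharp$ by Proposition~\ref{linear recollement}, where $\mathcal{U}$ carries the $\mathcal{X}$-action induced by $j^*$. The key formal observation is that an $\mathcal{X}$-linear functor $F\colon \mathcal{U} \to \mathcal{X}$ is detected by the composite $\Gamma_{\mathcal{X}!} \circ F\colon \mathcal{U} \to \Sp$: by $\mathcal{X}$-linearity one has $\Gamma_{\mathcal{X}!}\big(x \otimes_\mathcal{X} F(u)\big) \simeq (\Gamma_{\mathcal{X}!} \circ F)\big(j^*(x) \otimes_\mathcal{U} u\big)$, and since $\mathcal{X}$ is self-dual the pairing $\Gamma_{\mathcal{X}!}(x \otimes_\mathcal{X} \emptyinput)$ recovers objects of $\mathcal{X}$, so $F$ is determined by $\Gamma_{\mathcal{X}!} \circ F$. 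Consequently it suffices to prove $\Gamma_{\mathcal{X}!} \circ j_! \simeq \Gamma_{\mathcal{X}!} \circ j_\sharp$. The left-hand side is immediate: setting the free variable in (\ref{exceptional characterisation}) equal to $\mathbf{1}_\mathcal{X}$ and using that $j^*$ is symmetric monoidal gives $\Gamma_{\mathcal{X}!} \circ j_! \simeq \Gamma_{\mathcal{U}!}$. The whole theorem therefore reduces to the \emph{key identity} $\Gamma_{\mathcal{X}!} \circ j_\sharp \simeq \Gamma_{\mathcal{U}!}$.

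This key identity is the open-embedding counterpart of the equivalence $\Gamma_{\mathcal{U}!}^\mathcal{X} \simeq f_*$ used in the proper case (Proposition~\ref{rigid equivalent}), with the left adjoint $j_\sharp$ now playing the role of $f_*$. To prove it I would first show that $\mathcal{U}$ is locally rigid over $\mathcal{X}$ and that its relative exceptional global sections satisfy $\Gamma_{\mathcal{U}!}^\mathcal{X} \simeq j_\sharp$; the composition formula $\Gamma_{\mathcal{U}!} \simeq \Gamma_{\mathcal{X}!} \circ \Gamma_{\mathcal{U}!}^\mathcal{X}$ then yields the identity. Local rigidity is the easy half: since $j^*$ underlies a recollement it is a smashing localization, so $\mathcal{U} \otimes_\mathcal{X} \mathcal{U} \simeq \mathcal{U}$ and condition~\ref{c2} holds trivially (the multiplication $\Delta^*$ is an equivalence), while dualizability over $\mathcal{X}$ is furnished by the localization fibre sequence $j_\sharp j^* \to \id_\mathcal{X} \to i_* i^*$ together with the closed complement.

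The main obstacle is the identification $\Gamma_{\mathcal{U}!}^\mathcal{X} \simeq j_\sharp$ (equivalently, the key identity itself). Unlike the proper case, Proposition~\ref{rigid equivalent} cannot be invoked here, since $j^*$ does \emph{not} admit an $\mathcal{X}$-internal right adjoint: in a recollement $j_*$ fails to be cocontinuous. The content of the statement---that compactly supported sections of an extension by zero are computed on the open part---is thus genuinely non-formal, and it is precisely here that the full recollement structure (the six functors and the two gluing fibre sequences, in particular the interaction with the closed complement $\mathcal{Z}$) must be brought to bear in order to compute the $\mathcal{X}$-linear dual of $j^*$. I expect this computation to be the technical heart of the argument, the remaining reductions above being purely formal.
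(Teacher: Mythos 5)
Your formal reductions are all correct: by (\ref{Gamma}), post-composition with $\Gamma_{\mathcal{X}!}$ is an equivalence $[\mathcal{U},\mathcal{X}]_\mathcal{X} \simeq [\mathcal{U},\Sp]_\Sp$; both $j_!$ and $j_\sharp$ are $\mathcal{X}$-linear (Propositions \ref{linear} and \ref{linear recollement}); and (\ref{exceptional characterisation}) gives $\Gamma_{\mathcal{X}!} \circ j_! \simeq \Gamma_{\mathcal{U}!}$. But precisely because these steps are reversible, your ``key identity'' $\Gamma_{\mathcal{X}!}\circ j_\sharp \simeq \Gamma_{\mathcal{U}!}$ is equivalent to the theorem itself, and you never prove it: the proposal ends by declaring it ``the technical heart of the argument'' and gesturing at the recollement structure. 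That is a genuine gap, not a deferral of routine work---the entire content of the statement has been repackaged into the one step left open. Your intermediate claims (that $\mathcal{U}$ is locally rigid over $\mathcal{X}$ because $\mathcal{X}$-linearity of $j_\sharp$ makes $j^*$ a smashing localisation, and that one should identify $\Gamma_{\mathcal{U}!}^\mathcal{X} \simeq j_\sharp$ and conclude via the composition formula $\Gamma_{\mathcal{U}!} \simeq \Gamma_{\mathcal{X}!}\circ\Gamma_{\mathcal{U}!}^\mathcal{X}$) do outline a viable route, and the identification of the $\mathcal{X}$-linear dual of $j^*$ with $j_\sharp$ is in fact provable from the idempotent-algebra description of $\mathcal{U}$; but one would still have to check that the self-duality of $\mathcal{U}$ over $\mathcal{X}$ arising from the retract $j^*j_\sharp \simeq \id$ agrees with the Frobenius self-duality used to define $\Gamma_{\mathcal{U}!}^\mathcal{X}$, and none of this is carried out.

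The paper closes exactly this gap by a different and shorter route, which is worth comparing with your plan: instead of attacking the open part directly, it exploits the closed part. By Proposition \ref{linear recollement} the functor $i_*$ is $\mathcal{X}$-linear, i.e., $i^*$ is proper, so Theorem \ref{proper pushforward} gives $i_! \simeq i_*$; that is, under the self-dualities $\mathcal{U} \simeq [\mathcal{U},\Sp]^L$, $\mathcal{X} \simeq [\mathcal{X},\Sp]^L$, $\mathcal{Z} \simeq [\mathcal{Z},\Sp]^L$, the functor $i_*$ is identified with $(i^*)^\vee$. Dualising the whole recollement over $\Sp$ produces another recollement, and since any one of the six functors in a recollement determines all the others (Remark \ref{identified}), the identification of $i_*$ with $(i^*)^\vee$ forces the identification of $j_\sharp$ with $(j^*)^\vee = j_!$. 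In other words, the step you flagged as ``genuinely non-formal'' becomes formal once it is transferred through the closed complement, where the proper case applies; your observation that Proposition \ref{rigid equivalent} cannot be invoked for $j^*$ (because $j_*$ is not cocontinuous) is exactly right, and it is precisely the reason the paper routes the argument through $i^*$ rather than computing the dual of $j^*$ directly.
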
 

\begin{proof} 
Consider the diagram
\[\begin{tikzcd}[column sep=large]
	{[\mathcal{U},\Sp]^L} & {[\mathcal{X},\Sp]^L} & {[\mathcal{Z},\Sp]^L}
	\arrow["{(j^*)^\vee}", shift left=4.5, hook, from=1-1, to=1-2]
	\arrow["{((j_\sharp)^*)^R}"', shift right=1.5, hook, from=1-1, to=1-2]
	\arrow["{(j_\sharp)^\vee}"{description}, shift right=1.5, from=1-2, to=1-1]
	\arrow["{(i_*)^\vee}", shift left=1.5, from=1-2, to=1-3]
	\arrow["{((i^*)^\vee)^R}"', shift right=4.5, from=1-2, to=1-3]
	\arrow["{(i^*)^\vee}"{description}, shift left=1.5, hook', from=1-3, to=1-2]
\end{tikzcd}\]
Under the equivalences $\mathcal{U} \simeq [\mathcal{U},\Sp]^L, \;  \mathcal{X} \simeq [\mathcal{X},\Sp]^L, \;  \mathcal{Z} \simeq [\mathcal{Z},\Sp]^L$, the functor $i_*$ may be identified with $(i^*)^\vee$, because $i_! \simeq i_*$ by Proposition \ref{linear recollement} and Theorem \ref{proper pushforward}, so that $j_\sharp$ becomes identified with $(j^*)^\vee$ by Remark \ref{identified}, yielding the desired identification $j_! \simeq  j_\sharp$. 
\end{proof} 

\begin{question}	\label{question} 
In various geometric contexts a morphism $X \to Y$ is said to be \emph{separated} if the diagonal map $X \to X \times_Y X$ is a closed embedding. In \S \ref{Local rigidity} I argued that condition \ref{d2} in Definition \ref{locally rigid} could be viewed as form of separatedness. An alternative notion of separatedness for algebras in $\mathbf{Pr}^L$ more closely analogous to familiar geometric situations would thus be to require that the multiplication map in Definition \ref{locally rigid} be a closed embedding rather than just proper. Are there interesting examples of such algebras? What good properties do such algebras possess?  Or is the multiplication map of a locally rigid algebra automatically a closed embedding?  \qede
\end{question} 

\section{Parametrisation}	\label{Parametrisation}

Let $C$ be a site, and $\mathbf{D}^*: C^{op} \to \mathbf{CAlg}_{\mathcal{A}}$ a sheaf taking values in the subcategory of $\mathbf{CAlg}_{\mathcal{A}}$ spanned by locally rigid $\mathcal{A}$-algebras (where $\mathcal{A}$ is usually $\Sp$). Then $\mathbf{D}_! \defeq [\mathbf{D}^*, \mathcal{A}]^L: C \to \mathbf{Pr}_{\mathcal{A}}^L$ is a cosheaf, such that for each object $X$ in $C$ there is a canonical equivalence $\mathbf{D}^*X \simeq \mathbf{D}_!X$. I will write $\mathbf{D}X$ to mean $\mathbf{D}^*X$ or equivalently $\mathbf{D}_!X$. The promotion of the pair $\mathbf{D}^*, \mathbf{D}_!$ to a six functor formalism necessitates the following two prerequisites (with which all other parts of being a six functor formalism are properties, as explained in \cite{aK2023}): 
\begin{enumerate}[label = (\alph*)]
\item	\label{d1}	For each morphism $f: X \to Y$ in $C$ the functor $f_!: \mathbf{D}X \to \mathbf{D}Y$ satisfies the projection formula with respect to $f^*$. 
\item \label{d2}	For each pullback square 
\[\begin{tikzcd}
	{X'} & X \\
	{Y'} & Y
	\arrow["g", from=1-1, to=1-2]
	\arrow["p", from=1-1, to=2-1]
	\arrow["q", from=1-2, to=2-2]
	\arrow["f", from=2-1, to=2-2]
\end{tikzcd}\]
in $C$ the square 
\[\begin{tikzcd}
	{\mathbf{D}X'} & {\mathbf{D}X} \\
	{\mathbf{D}Y'} & {\mathbf{D}Y}
	\arrow["{g_!}", from=1-1, to=1-2]
	\arrow["{p^*}"', from=2-1, to=1-1]
	\arrow["{f_!}", from=2-1, to=2-2]
	\arrow["{q^*}"', from=2-2, to=1-2]
\end{tikzcd}\]
commutes. 
\end{enumerate} 
By Proposition \ref{linear}, prerequisite \ref{d1} is a condition, which is automatically satisfied. Prerequisite \ref{d2} constitutes extra structure which, together with \ref{d1}, should encode that $\mathbf{D}_!$ is adequately \emph{$C$-parametrisedly} $\mathbf{D}^*$-linearly dual to $\mathbf{D}^*$. It would be beneficial to make sense of what this means precisely. One difficulty which would have to be overcome is that locally rigid $\mathcal{A}$-algebras are not closed under limits in $\mathbf{CAlg}_{\mathcal{A}}$, so that the extension of $\mathbf{D}^*: C^{op} \to \mathbf{CAlg}_{\mathcal{A}}$ to $\Sh_C$ no longer necessarily takes values in locally rigid $\mathcal{A}$-algebras for non representable sheaves on $C$. 

\bibliographystyle{/usr/local/texlive/2023/texmf-dist/bibtex/bst/base/alpha2} 
\bibliography{/Users/adrianclough/Mathematics/My_Mathematics/Ancillary_files/Documents/Adrians_bibliography}

\end{document}